\font\smallit=cmti10
\font\smalltt=cmtt10
\renewcommand\section{\@startsection {section}{1}{\z@}
{-30pt \@plus -1ex \@minus -.2ex}
{2.3ex \@plus.2ex}
{\normalfont\normalsize\bfseries\boldmath}}
\renewcommand\subsection{\@startsection{subsection}{2}{\z@}
{-3.25ex\@plus -1ex \@minus -.2ex}
{1.5ex \@plus .2ex}
{\normalfont\normalsize\bfseries\boldmath}}
\renewcommand{\@seccntformat}[1]{\csname the#1\endcsname. }
\newtheorem{theorem}{Theorem}[section]
\newtheorem{lemma}[theorem]{Lemma}
\newtheorem{proposition}[theorem]{Proposition}
\newtheorem{corollary}[theorem]{Corollary}
\theoremstyle{definition}
\newtheorem{definition}[theorem]{Definition}
\newtheorem{remark}[theorem]{Remark}
\begin{document}
\begin{center}
{\bf CONGRUENCES RELATING REGULAR PARTITION FUNCTIONS, A GENERALIZED TAU FUNCTION AND PARTITION FUNCTION WEIGHTED COMPOSITION SUMS}\vskip 20pt 

{\bf S. Sriram}\\
{\smallit Post Graduate and Research Department of Mathematics, National College, \\ Affiliated to Bharathidasan University, Tiruchirapalli, Tamil Nadu, India}\\
{\tt sriram.priya02@yahoo.com}\\
\vskip 20pt

{\bf A. David Christopher\footnote{Corresponding author}}\\
{\smallit Department of Mathematics, The American College, Madurai, Tamil Nadu, India.}\\
{\smallit Post Graduate and Research Department of Mathematics, National College,\\ Affiliated to Bharathidasan University, Tiruchirapalli, Tamil Nadu, India}\\
{\tt davidchristopher@americancollege.edu.in}\\
\vskip 20pt
\end{center}

\vskip 20pt
\centerline{\smallit Received: , Accepted: , Published: }
\vskip 30pt

\centerline{\bf Abstract}

\noindent
Let $n$ and $t$ be positive integers with $t\geq 2$. Let $R_t(n)$ be the number of $t$-regular partitions of $n$. 
A class of functions, denoted $\tau_k(n)$, is defined as follows:
\[
q\prod_{m=1}^{\infty}(1-q^m)^k=\sum_{n=1}^{\infty}\tau_k(n)q^n,
\]
where $k$ is an integer. We express $\tau_k(n)$ as a binomial coefficient weighted partition sum. Consequently, we obtain congruence identities that relate $\tau_k(n)$, $R_t(n)$ and partition function weighted composition sums. 

\pagestyle{myheadings}
\markright{\smalltt INTEGERS:19(2019)\hfill}
\thispagestyle{empty}
\baselineskip=12.875pt

\vskip 30pt
\section{Introduction}


Euler \cite{euler} considered the following product-to-sum representation:
\begin{equation*}
\prod_{m=1}^{\infty}(1-q^m)=\sum_{n=0}^{\infty}\omega(n)q^n,
\end{equation*}
and found that 
\begin{equation}\label{PNT}
\omega(n)=\begin{cases}
(-1)^{l}&\text{ if }n=\frac{3l^2\pm l}{2};\\
0&\text{ otherwise.}
\end{cases}
\end{equation}
This is the celebrated Euler's pentagonal number theorem.

Ramanujan \cite{ramanujan} considered the following product-to-sum representation:
\begin{equation*}
q\prod_{m=1}^{\infty}{(1-q^m)^{24}}=\sum_{n=1}^{\infty}\tau(n)q^n,
\end{equation*}
and made the following conjectures:
\begin{enumerate}
\item $\tau(nm)=\tau(n)\tau(m)$ if $\gcd(m,n)=1$,
\item for prime $p$ and integer $r\geq 1$:
$\tau(p^{r+1})=\tau(p)\tau(p^r)-p^{11}\tau(p^{r-1}),
$
\item for prime $p$:
$|\tau(p)|\leq 2p^{\frac{11}{2}}.
$
\end{enumerate}
The first two were established by Mordell \cite{mordell-1}. Delinge \cite{delinge} established the third. The function $\tau(n)$ defined above is known as Ramanujan's tau function.

The following common generalization of the aforementioned functions of Ramanujan and Euler is the object of study in this article.
\begin{definition}\label{main-definition}
Let $k\neq 0$ be an integer. We define an arithmetical function, denoted $\tau_k(n)$, in the following way:
\begin{equation}
q\prod_{m=1}^{\infty}(1-q^m)^k=\sum_{n=1}^{\infty}\tau_k(n)q^n.
\end{equation}
\end{definition}
Newman \cite{newman} and Kostant \cite{kostant} were concerned with the polynomial representation of $\tau_k(n)$. Serre \cite{serre} and Heim et al \cite{Bernard} examined  the natural density of the set $\{k\in\mathbb{N}: \tau_k(n)\neq 0\}$ at several instances of $n$.

The main objective of this paper is to explore various arithmetic properties of $\tau_k(n)$. We will first use the logarithmic derivative method to determine some arithmetic properties of $\tau_k(n)$ (at specific instances of $k$). This forms the core part of Section 2. In Section 3, $\tau_k(n)$ is expressed as a binomial coefficient weighted partition sum. As a result, congruence relations involving $\tau_k(n)$ and $t$-regular partition functions are obtained (at certain instances of $k$ and $t$). In Section 4, an expression for 
$$
\sum_{\substack{{n=a_1+a_2+\cdots a_k}\\{a_{i}\in\mathbb{N}\cup\{0\}}}}p(a_{1})p(a_{2})\cdots p(a_{k})\text{ modulo }{l}
$$
is obtained, where $l$ is an odd prime number, and $p(n)$ is the number of partitions of $n$.
\section{ Divisibility Properties of $\tau_k(n)$ using Logarithmic Differentiation}
In this section, we will discuss several congruence properties of $\tau_k(n)$ when the modulus belongs to the set $\{k-1\}\cup\{d\in\mathbb N:d|k\}$.
\subsection{ $\tau_k(n)$ Modulo $k-1$ when $k-1$ is a Prime Number}
\begin{proposition}\label{modulok-1}
Let $n$ be a positive integer, and let $k-1$ be a prime number. If $0\leq n-1-\frac{3r^2\pm r}{2}\not\equiv 0\pmod{(k-1)}$ for every non-negative integer $r$, then 
\[\tau_k(n)\equiv0\pmod{(k-1)}.
\]
\end{proposition}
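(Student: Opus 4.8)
The plan is to reduce the generating function $q\prod_{m=1}^{\infty}(1-q^m)^k$ modulo the prime $p=k-1$ and then read off the coefficient of $q^n$. First I would split the exponent as $k=(k-1)+1$, writing
\[
q\prod_{m=1}^{\infty}(1-q^m)^k = q\left(\prod_{m=1}^{\infty}(1-q^m)\right)^{k-1}\prod_{m=1}^{\infty}(1-q^m).
\]
Since $p=k-1$ is prime, the binomial theorem modulo $p$ (the ``freshman's dream'') gives $(1-q^m)^p\equiv 1-q^{pm}\pmod p$ for each $m$, and multiplying over all $m$ (a legitimate manipulation of formal power series with integer coefficients) yields $\left(\prod_{m}(1-q^m)\right)^{p}\equiv\prod_{m}(1-q^{pm})\pmod p$. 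Hence
\[
q\prod_{m=1}^{\infty}(1-q^m)^k\equiv q\prod_{m=1}^{\infty}(1-q^{pm})\prod_{m=1}^{\infty}(1-q^m)\pmod p.
\]

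Next I would expand both products on the right using Euler's pentagonal number theorem \eqref{PNT}. Writing $\prod_m(1-q^m)=\sum_{r\geq 0}\omega(r)q^r$ and $\prod_m(1-q^{pm})=\sum_{s\geq 0}\omega(s)q^{ps}$, and comparing coefficients of $q^n$ on both sides (accounting for the leading factor $q$), I obtain
\[
\tau_k(n)\equiv\sum_{ps+r=n-1}\omega(s)\,\omega(r)\pmod p,
\]
where the sum runs over pairs of non-negative integers $(s,r)$. By \eqref{PNT}, a summand is nonzero precisely when both $s$ and $r$ are generalized pentagonal numbers; in particular each contributing $r$ has the form $r=\frac{3t^2\pm t}{2}$ for some non-negative integer $t$.

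Finally I would match this against the hypothesis. Suppose, toward a contradiction, that some summand is nonzero. Then $r=n-1-ps$ is a generalized pentagonal number $\frac{3t^2\pm t}{2}$, so that
\[
n-1-\frac{3t^2\pm t}{2}=ps\geq 0\qquad\text{and}\qquad n-1-\frac{3t^2\pm t}{2}=ps\equiv 0\pmod p.
\]
This directly contradicts the assumption that $0\leq n-1-\frac{3r^2\pm r}{2}\not\equiv 0\pmod{(k-1)}$ for every non-negative integer $r$. Hence every summand vanishes and $\tau_k(n)\equiv 0\pmod{(k-1)}$, as claimed. I expect the only delicate points to be bookkeeping ones: keeping the shift by $q$ straight when extracting the coefficient of $q^n$, and remembering that $0$ counts as a generalized pentagonal number with $\omega(0)=1$ (the case $s=0$ corresponds to $n-1$ itself being pentagonal, which the hypothesis already excludes by taking $t$ with $\frac{3t^2\pm t}{2}=n-1$). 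The one genuine ingredient is the Frobenius reduction of the $(k-1)$-st power, which is exactly where the primality of $k-1$ enters.
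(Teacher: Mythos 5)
Your proof is correct, but it takes a genuinely different route from the paper's. Both arguments begin with the same decomposition $\prod_m(1-q^m)^k=\prod_m(1-q^m)^{k-1}\cdot\prod_m(1-q^m)$ and both invoke the pentagonal number theorem for the single remaining factor, but the key lemma differs. The paper establishes that $\tau_{k-1}(m)\equiv 0\pmod{k-1}$ whenever $m-1\not\equiv 0\pmod{k-1}$ by applying $q\frac{d}{dq}\log$ to $\prod_m(1-q^m)^{k-1}$, which produces the recurrence $n\tau_{k-1}(n+1)=-(k-1)\sum_{i=1}^{n}\tau_{k-1}(i)\sigma(n+1-i)$ and hence the divisibility when $\gcd(n,k-1)=1$; it then feeds this into the convolution $\tau_k(n)=\sum_i\tau_{k-1}(n-i)\omega(i)$. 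You instead collapse the $(k-1)$-st power outright via the Frobenius congruence $(1-q^m)^{p}\equiv 1-q^{pm}\pmod{p}$, obtaining the explicit reduction $\tau_k(n)\equiv\sum_{ps+r=n-1}\omega(s)\omega(r)\pmod{p}$. Your version is self-contained within this proposition and actually yields strictly more information: not merely the vanishing under the stated hypothesis, but a closed-form expression for $\tau_k(n)$ modulo $k-1$ as a (very sparse) double pentagonal sum, in the spirit of the paper's later theorems such as Theorem \ref{tau-prime-modulo}. The paper's route, by contrast, sets up the logarithmic-differentiation machinery that it reuses throughout Section 2 (Propositions \ref{modulok} and \ref{modulo-divisorsofk}), so its extra effort is amortized. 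Both proofs use the primality of $k-1$ at exactly one point: you to kill the middle binomial coefficients of $(1-q^m)^{p}$, the paper to pass from $\gcd(n,k-1)=1$ to $n\not\equiv 0\pmod{k-1}$.
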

\begin{proof}
The pentagonal number theorem of Euler \cite{euler} allows us to write 
\begin{align}
\notag \sum_{n=1}^{\infty}\tau_k(n)q^n&=q\prod_{m=1}^{\infty}(1-q^m)^{k}\\
&\label{tauk-rec}=\left(\sum_{r=1}^{\infty}\tau_{k-1}(r)q^r\right)\left(\sum_{s=0}^{\infty}\omega(s)q^s\right),
\end{align}
where $\omega(s)$ is as in (\ref{PNT}).
When equating the coefficients of $q^n$ at the extremes of the chain of equalities (\ref{tauk-rec}), we obtain the following identity:
\[ \label{identity1}
\sum_{i=0}^{n-1}\tau_{k-1}(n-i)\omega(i)=\tau_{k}(n).
\]
If $\tau_{k-1}(n-i)\equiv0\pmod{(k-1)}$ whenever $\omega(i)=\pm1$, then it follows that $\tau_k(n)\equiv0\pmod{(k-1)}$. So a criterion for \[\tau_{k-1}(n)\equiv0\pmod{(k-1)}\] is a requisite to proceed further. 
To that end, we define 
\[
T_{k-1}(q)=\prod_{m=1}^{\infty}(1-q^m)^{k-1}=\sum_{n=1}^{\infty}\tau_{k-1}(n)q^{n-1}.
\]
Now performing the operation $q\frac{d}{dq}(\log{T_{k-1}(q)})$ and considering the Lambert's series expansion for the sum of positive divisors of $n$ (denoted $\sigma(n)$) we obtain
\begin{equation}\label{recurrence}
n\tau_{k-1}(n+1)=-(k-1)\left(\sum_{i=1}^{n}\tau_{k-1}(i)\sigma(n+1-i)\right).
\end{equation}
Now we observe from the Identity (\ref{recurrence}) that, if $\gcd(n,k-1)=1$,
then $\tau_{k-1}(n+1)\equiv 0\pmod{(k-1)}$. Since $k-1$ is a prime number, the condition $\gcd(n,k-1)=1$ is equivalent to the condition $n\not\equiv 0\pmod{(k-1)}$. Thus, if $n\not\equiv0\pmod{(k-1)}$ then $\tau_{k-1}(n+1)\equiv0\pmod{(k-1)}$. This completes the proof.
\end{proof}
\begin{remark}
Since the $\tau$ function was introduced, an in-depth study over the value $\tau(n)$ modulo ${23}$ is an important consideration. Mordell \cite{mordell-2} gave the following criterion for the divisibility of $\tau(n)$ by 23:
\[
\tau(23n+m)\equiv0\pmod{23}
\]
for $m\in\{5, 7, 10, 11, 14, 15, 17, 19, 20, 21, 22\}$. 
Here an application of Proposition \ref{modulok-1} gives the following criterion: if $0\leq n-1-\frac{3r^2\pm r}{2}\not\equiv 0\pmod{23}$ for every non-negative integer $r$, then 
\[\tau(n)\equiv0\pmod{23}.
\]
\end{remark}
\subsection{ $\tau_k(n)$ Modulo Divisors of $k$}
Interestingly, by substituting any formal power series with integer coefficients, say $f(q)\in\mathbb{Z}[[q]]$, for $\prod\limits_{m=1}^{\infty}(1-q^m)$, one can further generalize Definition \ref{main-definition}. Denote
\[f(q)^{k}=a_0+a_1q+a_2q^2+\cdots.
\]
On differentiating with respect to $q$, we have
\[k{f(q)}^{k-1}f'(q)=a_1+2a_2q+\cdots .
\]
This gives the relation
\[na_{n}\equiv 0\pmod{|k|}.
\]
Now fixing $f(q)=\prod\limits_{m=1}^{\infty}(1-q^m)$ we have $a_n=\tau_k(n+1)$. This observation yields the following result.
\begin{proposition}\label{modulok}
Let $n$ be a positive integer, and let $k$ be an integer with $\lvert k\rvert \geq 2$. Then we have
\[
n\tau_k(n+1)\equiv 0\pmod{|k|}.
\]
\end{proposition}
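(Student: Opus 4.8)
The plan is to work entirely inside the ring of formal power series $\mathbb{Z}[[q]]$ and extract the congruence by comparing coefficients after a single formal differentiation, exactly as the preceding paragraph foreshadows. First I would set $f(q)=\prod_{m=1}^{\infty}(1-q^m)$. Since $f$ has constant term $1$, it is a unit in $\mathbb{Z}[[q]]$, so for every integer $k$ (positive or negative) the power $f(q)^k$ again lies in $\mathbb{Z}[[q]]$; write $f(q)^k=\sum_{n\geq 0}a_nq^n$ with each $a_n\in\mathbb{Z}$.

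Next I would take the formal derivative of both sides. By the chain rule for formal power series we have $\frac{d}{dq}f(q)^k=k\,f(q)^{k-1}f'(q)$, while the series $\sum_{n\geq 0}a_nq^n$ differentiates term-by-term to $\sum_{n\geq 1}na_nq^{n-1}$. The crucial observation is that the left-hand side is $k$ times the element $f(q)^{k-1}f'(q)$, which itself lies in $\mathbb{Z}[[q]]$ (using once more that $f$ is a unit, so $f(q)^{k-1}\in\mathbb{Z}[[q]]$, together with $f'(q)\in\mathbb{Z}[[q]]$). Hence every coefficient of $k\,f(q)^{k-1}f'(q)$ is an integer multiple of $k$, so comparing the coefficient of $q^{n-1}$ on the two sides gives $na_n\equiv 0\pmod{|k|}$ for all $n\geq 1$.

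Finally I would translate back to $\tau_k$. From Definition \ref{main-definition}, $q\,f(q)^k=\sum_{n\geq 1}\tau_k(n)q^n$, so dividing by $q$ yields $f(q)^k=\sum_{n\geq 1}\tau_k(n)q^{n-1}=\sum_{n\geq 0}\tau_k(n+1)q^n$, i.e. $a_n=\tau_k(n+1)$. Substituting this into $na_n\equiv 0\pmod{|k|}$ produces the assertion $n\tau_k(n+1)\equiv 0\pmod{|k|}$.

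There is no hard estimate or delicate combinatorics here, so the only point that genuinely needs care—and the sole place the sign of $k$ enters—is the claim that $f(q)^{k-1}$ has integer coefficients when $k$ is negative; this rests entirely on $f$ being a unit in $\mathbb{Z}[[q]]$, which is also what lets the argument treat all $k\neq 0$ uniformly and makes the pentagonal structure of $f$ irrelevant. The hypothesis $|k|\geq 2$ is merely what renders the modulus nontrivial, since for $k=\pm 1$ the congruence degenerates; everything else is formal manipulation.
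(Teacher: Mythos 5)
Your proposal is correct and follows essentially the same route as the paper: the paper likewise writes $f(q)^k=\sum a_nq^n$ for $f(q)=\prod_{m\geq 1}(1-q^m)$, differentiates to get $kf(q)^{k-1}f'(q)=\sum na_nq^{n-1}$, and reads off $na_n\equiv 0\pmod{|k|}$ with $a_n=\tau_k(n+1)$. Your added justification that $f$ is a unit in $\mathbb{Z}[[q]]$ (so that $f(q)^{k-1}f'(q)$ has integer coefficients even for negative $k$) is a point the paper leaves implicit, and it is the right thing to make explicit.
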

The following result is a straightforward application of Proposition \ref{modulok}.
\begin{proposition}\label{modulo-divisorsofk}
Let $m\geq 0$ and $k$ be integers such that $\lvert k\rvert\geq 2$. Then we have
\[
\tau_k(|k|m+dr+1)\equiv0\left(\textrm{mod\ }{\frac{|k|}{d}}\right)
\]
for every $d\mid |k|$ such that $d<|k|$, and for every integer $r$ such that $\gcd(r,\frac{|k|}{d})=1$.
\end{proposition}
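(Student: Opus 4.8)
The plan is to read the target as a direct consequence of Proposition \ref{modulok} via a single substitution, followed by an elementary cancellation using the coprimality hypothesis. First I would set $n = |k|m + dr$, so that $n+1$ is precisely the argument $|k|m+dr+1$ appearing in the conclusion. Assuming this $n$ is a positive integer (the cases $|k|m+dr+1\le 0$ being vacuous, since $\tau_k$ vanishes on non-positive arguments), Proposition \ref{modulok} gives
\[
(|k|m + dr)\,\tau_k(|k|m+dr+1) \equiv 0 \pmod{|k|}.
\]
Because $|k|m \equiv 0 \pmod{|k|}$, the first factor reduces to $dr$ modulo $|k|$, so that
\[
dr\,\tau_k(|k|m+dr+1) \equiv 0 \pmod{|k|}.
\]

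Next I would use the hypothesis $d \mid |k|$ to divide the divisibility statement $|k| \mid dr\,\tau_k(|k|m+dr+1)$ through by $d$. Writing $|k| = d\cdot\frac{|k|}{d}$ and noting that $d$ appears as a literal factor on the right, this yields
\[
\frac{|k|}{d} \mid r\,\tau_k(|k|m+dr+1).
\]
Finally, the assumption $\gcd\!\left(r,\tfrac{|k|}{d}\right)=1$ allows me to cancel the factor $r$ by Euclid's lemma, giving $\frac{|k|}{d} \mid \tau_k(|k|m+dr+1)$, which is exactly the asserted congruence.

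I do not expect any genuine obstacle: every step is an elementary manipulation of divisibilities resting entirely on the already-established Proposition \ref{modulok}. The only points that warrant a moment's care are the two hypotheses on $d$ and $r$, each of which plays a precise role. The condition $d \mid |k|$ is what makes $\frac{|k|}{d}$ an integer and legitimizes cancelling $d$ from the product; the coprimality condition $\gcd\!\left(r,\tfrac{|k|}{d}\right)=1$ is exactly what is needed to strip off the remaining factor $r$ at the last step; and the condition $d < |k|$ merely guarantees $\frac{|k|}{d} > 1$, so that the resulting congruence is non-trivial. This confirms the remark in the text that the result is a straightforward application of the preceding proposition.
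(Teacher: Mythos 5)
Your proof is correct and follows essentially the same route as the paper: both deduce the result from Proposition \ref{modulok} by reducing the factor $n$ modulo $|k|$, dividing the divisibility relation by $d$, and cancelling the factor coprime to $\frac{|k|}{d}$. The only (cosmetic) difference is direction — the paper starts from $\gcd(n,|k|)=d$ and derives the parametrization $n=|k|m+dr$, whereas you substitute that form directly — and your explicit handling of the degenerate case $|k|m+dr\le 0$ is a small improvement in care.
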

\begin{proof}
We have the congruence below based on Proposition \ref{modulok}:
\[n\tau_k(n+1)\equiv0\pmod{|k|}.
\]
Given the aforementioned congruence and $\gcd(n,|k|)=d$, it can be deduced that 
\[\frac{n}{d}\tau_k(n+1)\equiv0\left(\textrm{mod}{\frac{|k|}{d}}\right). \]
Since $\gcd(n,|k|)=d$, for a positive integer $r$ satisfying $\gcd\left(r,\frac{|k|}{d}\right)=1$, the integer $\frac{n}{d}$ must have the form $\frac{n}{d}=\frac{|k|}{d}m+r$ for some integer $m\geq 0$. As a result, $n$ assumes the form $n=|k|m+dr$. 
\end{proof}
The following list of congruences for $\tau(n)$ modulo the divisors of 24 is obtained by substituting $24$ for $k$.
\begin{proposition}
For every integer $m\geq 0$, we have
\begin{enumerate}
\item $\tau(24m+r+1)\equiv 0\pmod {24}$ for each $r\in\{1,5,7,11,13,17,19,23\}$;
\item $\tau(24m+r+1)\equiv 0\pmod{12}$ for each $r\in\{4,20\}$;
\item $\tau(24m+r+1)\equiv 0\pmod{8}$ for each $r\in\{3,9,6,15\}$;
\item $\tau(24m+r+1)\equiv 0\pmod{6}$ for each $r\in\{8,16\}$;
\item $\tau(24m+13)\equiv 0\pmod{4}$.
\end{enumerate}
\end{proposition}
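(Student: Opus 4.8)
The plan is to specialize the machinery of Subsection 2.2 to $k=24$. The first step is to observe that Ramanujan's function is literally an instance of Definition \ref{main-definition}: comparing Ramanujan's defining product $q\prod_{m\ge 1}(1-q^m)^{24}$ with Definition \ref{main-definition} at $k=24$ gives $\tau(n)=\tau_{24}(n)$ for every $n$. Consequently every statement proved for $\tau_k$ with $\lvert k\rvert\ge 2$ transfers verbatim to $\tau$ upon setting $k=24$, and it is precisely the rich divisor structure of $24$, namely $\{1,2,3,4,6,8,12,24\}$, that splits the conclusion into the five separate congruences. So the whole proposition should fall out as a finite specialization, with no new analytic ingredient needed.

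The engine is Proposition \ref{modulok}, which at $k=24$ asserts $n\,\tau(n+1)\equiv 0\pmod{24}$ for every $n\ge 1$; equivalently, Proposition \ref{modulo-divisorsofk} supplies, for each divisor $d\mid 24$ with $d<24$ and each $r$ with $\gcd\!\left(r,\tfrac{24}{d}\right)=1$, the congruence $\tau(24m+dr+1)\equiv 0\pmod{24/d}$. My plan is therefore to run $d$ over $\{1,2,3,4,6,8,12\}$, producing the moduli $24/d\in\{24,12,8,6,4,3,2\}$, and for each $d$ read off the admissible residues $dr\bmod 24$. This is bookkeeping rather than mathematics, so I would organize it by a single gcd computation instead of case-splitting on $r$.

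Concretely, for a fixed shift $r_0$ (so the argument is $24m+r_0+1$ and $n=24m+r_0$), the only relevant quantity is $\gcd(n,24)$, and since $24\mid 24m$ this equals $\gcd(r_0,24)$, independent of $m$. Writing $g=\gcd(r_0,24)$, I divide the congruence $n\,\tau(n+1)\equiv 0\pmod{24}$ through by $g$ and invoke $\gcd\!\left(\tfrac{n}{g},\tfrac{24}{g}\right)=1$ to cancel the factor, obtaining $\tau(24m+r_0+1)\equiv 0\pmod{24/g}$. Thus each item is pinned down by the single fact $g=\gcd(r_0,24)$: item 1 collects the eight residues with $g=1$ (modulus $24$), and the remaining items should collect the residues with $g\in\{2,3,4,6,8,12\}$ (moduli $12,8,6,4,3,2$ respectively). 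The verification is then a finite, purely computational check.

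The one delicate point, and the place I would scrutinize most carefully, is matching each claimed modulus to the sharpest one this method actually delivers, which is exactly $24/\gcd(r_0,24)$. The coprimality requirement $\gcd\!\left(r,\tfrac{24}{d}\right)=1$ forces $d=\gcd(r_0,24)$ whenever one wants the strongest conclusion, so I must be careful to assign each residue its exact gcd rather than a proper divisor, which would weaken the congruence. The genuine obstacle is that if some listed item asserts a modulus strictly larger than $24/\gcd(r_0,24)$, then that item cannot follow from Proposition \ref{modulok} alone; securing it would require an independent input, such as the classical Hecke-eigenform congruences for $\tau$. In writing the proof I would flag any such residue explicitly and either supply the extra ingredient or adjust the modulus to $24/\gcd(r_0,24)$, which is the sharpest value the logarithmic-differentiation method of this section can guarantee.
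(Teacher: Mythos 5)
Your method is exactly the paper's: the paper offers no argument beyond the remark that the list ``is obtained by substituting $24$ for $k$'' in Proposition \ref{modulo-divisorsofk}, and your reduction of the whole question to the single invariant $g=\gcd(r_0,24)$, giving $\tau(24m+r_0+1)\equiv 0\pmod{24/g}$, is the correct and cleanest way to organize that substitution. That derivation is sound: from $n\tau(n+1)\equiv 0\pmod{24}$ with $n=24m+r_0$ and $g=\gcd(n,24)=\gcd(r_0,24)$ one may indeed cancel $g$ and conclude modulo $24/g$.

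The ``delicate point'' you defer to the end, however, is not a hypothetical worry --- it is where the stated proposition actually breaks, so you must carry the bookkeeping through. Doing so gives: modulus $24$ for $r_0\in\{1,5,7,11,13,17,19,23\}$; modulus $12$ for $r_0\in\{2,10,14,22\}$; modulus $8$ for $r_0\in\{3,9,15,21\}$; modulus $6$ for $r_0\in\{4,20\}$; modulus $4$ for $r_0\in\{6,18\}$; modulus $3$ for $r_0\in\{8,16\}$; modulus $2$ for $r_0=12$. Only item 1 of the proposition matches this table. Items 2, 4 and 5 assert moduli twice as large as the method delivers, and they are in fact false: at $m=0$ one has $\tau(5)=4830\equiv 6\pmod{12}$, $\tau(9)=-113643$ is odd and hence $\equiv 3\pmod{6}$, and $\tau(13)=-577738\equiv 2\pmod{4}$. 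Item 3 includes $r_0=6$, for which the method yields only modulus $4$, and omits $r_0=21$. So do not search for an ``extra ingredient'' to rescue items 2, 4 and 5 --- none exists, since they fail numerically; the honest conclusion of your argument is the corrected table above, and the proposition as printed needs to be amended to match it.
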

\section{ Representation of $\tau_k(n)$ as a Partition Sum involving Binomial Coefficients} 
In this section, congruence properties of $\tau_k(n)$ are derived using a partition sum representation (involving binomial coefficients) of $\tau_k(n)$. Presenting the main results of this section requires the following definitions of partition theory.
\begin{definition}
\normalfont Let $n$ be a positive integer. By a {\it partition of $n$}, we mean a non-increasing sequence of positive integers whose sum equals $n$. Each element of the sequence is called a {\it part}. If each part, say $a_i$, appears $f_i$ times in a partition of $n$ then we denote that partition by $n=a_1^{f_1}\cdots a_r^{f_r}$. In this case $f_1, \cdots ,f_r$ are said to be the {\it frequencies} of the partition $a_1^{f_1}\cdots a_r^{f_r}$.
\end{definition}
\begin{definition}
\normalfont Let $n$ and $t\geq 2$ be positive integers. If all of the parts of a partition of $n$ are not divisible by $t$, then the partition is called a {\it $t$-regular partition}. We denote the number of $t$-regular partitions of $n$ by $R_t(n)$.
\end{definition}
We note that the number of partitions of $n$ with parts from the set $\mathbb{N}\setminus t\mathbb{N}$ equals the number of $t$-regular partitions of $n$, from which the following equalities arise:
\begin{align*}
\sum_{n=0}^{\infty}R_t(n)q^n&=\prod\limits_{r\in\mathbb{N}\setminus t\mathbb{N}}\frac{1}{1-q^r}\\
&=\prod_{s\in \mathbb{N}}\frac{1-q^{ts}}{1-q^{ts}}\prod\limits_{r\in\mathbb{N}\setminus t\mathbb{N}}\frac{1}{1-q^r}\\ 
&=\prod_{m=1}^{\infty}\frac{1-q^{tm}}{1-q^m}.
\end{align*}
This insight is one we utilize frequently in this section.
We express $\tau_k(n)$ as a binomial-coefficient-weighted partition sum in the following result.
\begin{theorem}\label{binomial-representation}
Let $k$ be a positive integer. We have
\begin{description}
\item{(a)}
\begin{equation}\label{positive}
\tau_k(n+1)=\sum_{\substack{n=a_1^{f_1}\cdots a_r^{f_r};\\ f_i\leq k.}}(-1)^{f_1+\cdots f_r}{k\choose{f_1}}\cdots {k\choose{f_r}},
\end{equation}
\item{(b)}
\begin{equation}\label{negative}
\tau_{-k}(n+1)=\sum_{\substack{n=a_1^{f_1}\cdots a_r^{f_r}}}{f_1+k-1\choose{k-1}}\cdots {f_r+k-1\choose{k-1}}. 
\end{equation}
\end{description}
\end{theorem}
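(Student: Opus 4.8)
The plan is to prove both parts by directly expanding the defining product of Definition \ref{main-definition} as a formal power series and reading off the coefficient of each power of $q$. First I would divide the defining relation by $q$, so that
\[
\prod_{m=1}^{\infty}(1-q^m)^{k} = \sum_{n=0}^{\infty}\tau_k(n+1)q^n,
\]
and likewise with $-k$ in place of $k$; this reduces both statements to computing the coefficient of $q^n$ in an infinite product of single-variable factors.

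For part (a), since $k$ is a positive integer each factor is a polynomial, and the finite binomial theorem gives $(1-q^m)^k = \sum_{f=0}^{k}(-1)^f\binom{k}{f}q^{mf}$. I would then expand the product $\prod_{m\ge 1}$ of these polynomials in the ring $\mathbb{Z}[[q]]$: a general term is obtained by choosing, for each $m$, an exponent $f_m\in\{0,1,\dots,k\}$, and it contributes $(-1)^{\sum_m f_m}\bigl(\prod_m\binom{k}{f_m}\bigr)q^{\sum_m m f_m}$. The coefficient of $q^n$ is the sum of these contributions over all frequency sequences $(f_m)$ with $\sum_m m f_m = n$. The key observation is that such a sequence is exactly the data of a partition of $n$ in which the part $m$ occurs $f_m$ times, subject to $f_m\le k$; because $\binom{k}{0}=1$, the factors coming from absent parts are harmless. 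Writing the nonzero frequencies as $f_1,\dots,f_r$ attached to distinct parts $a_1,\dots,a_r$ yields precisely the right-hand side of (\ref{positive}).

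For part (b) the argument is identical in structure, the only change being that the exponent $-k$ is negative, so each factor is now an honest power series. Here I would invoke the generalized binomial series
\[
(1-q^m)^{-k} = \sum_{f=0}^{\infty}\binom{f+k-1}{k-1}q^{mf}
\]
(equivalently, $\binom{-k}{f}(-1)^f=\binom{f+k-1}{k-1}$), which places no upper bound on the frequency $f$. Expanding the product exactly as before, the coefficient of $q^n$ becomes a sum over all partitions of $n$, with no restriction on frequencies, carrying the weight $\prod_i\binom{f_i+k-1}{k-1}$; since $\binom{k-1}{k-1}=1$ the absent parts again contribute trivially, and this is (\ref{negative}).

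The one point that genuinely needs care, rather than being merely routine, is the interchange of the infinite product with the extraction of coefficients, and I expect this to be the main (though mild) obstacle. One must verify that the expansion is legitimate in $\mathbb{Z}[[q]]$. This is guaranteed by the observation that, for each fixed $n$, only parts $m\le n$ and frequencies satisfying $mf_m\le n$ can contribute, so the coefficient of $q^n$ is a finite sum and the rearrangement into a sum over partitions is valid term by term.
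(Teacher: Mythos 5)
Your proposal is correct and follows essentially the same route as the paper: expand each factor $(1-q^m)^{\pm k}$ by the (finite or generalized) binomial theorem and identify the coefficient of $q^n$ with a sum over partitions of $n$ recorded by their frequencies. Your added remark on why the coefficient extraction is a finite, hence legitimate, computation in $\mathbb{Z}[[q]]$ is a welcome point of rigor that the paper leaves implicit.
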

\begin{proof}
We have 
\begin{align*}
\sum_{n=1}^{\infty}\tau_k(n)q^n&=q\prod_{m=1}^{\infty}(1-q^m)^k\\ 
&=q\prod_{m=1}^{\infty}\left(1-{k\choose 1}q^{1\cdot m}+{k\choose 2}q^{2\cdot m}-\cdots +(-1)^{k}{k\choose k}q^{k\cdot m}\right).
\end{align*}
The above equality suggests that the value $(-1)^{f_1+f_2+\cdots +f_r}{k\choose {f_1}}{k\choose{f_2}}\cdots {k\choose{f_r}}$ contributes to the coefficient of $q^{n+1}$ for each partition of $n$ of the form $n=a_1^{f_1}\cdots a_r^{f_r}$ with the restriction $1\leq f_i\leq k$, and vice versa. Therefore, (a) is implied.

We have
\begin{align*}
\sum_{n=1}^{\infty}\tau_{-k}(n)q^n&=q\prod_{m=1}^{\infty}(1-q^m)^{-k}\\
&=q\prod_{m=1}^{\infty}\left({k-1\choose{k-1}}+{{(k-1)+1}\choose{k-1}}q^m+{(k-1)+2\choose{k-1}}q^{2m}+\cdots\right).
\end{align*}
The aforementioned equality suggests that the value ${{f_1+k-1}\choose{k-1}}\cdots {{f_r+k-1}\choose{k-1}}$ contributes to the coefficient of $q^{n+1}$ for each partition of $n$ of the type $n=a_1^{f_1}\cdots a_r^{f_r}$, and vice versa. Thus, (b) is implied.
\end{proof}
\subsection{ Parity Results connecting $\tau_k(n)$ and $R_t(n)$ at Specific Instances of $k$ and $t$}
We can get a parity result for $\tau_{k}(n)$ (which involves a partition function) using the partition sum representation mentioned in Theorem \ref{binomial-representation}.
\begin{definition}
Let $n$ be a positive integer and let $A$ be a set of positive integers. We define $F_A(n)$ to be the number of partitions of $n$ having each frequency from the set $A$.
\end{definition}
\begin{theorem}\label{parity-gen}
Let $k$ be a positive integer. Let $A=\{a\in\mathbb N:a\leq k, {{k}\choose{a}}\equiv 1\pmod{2}\}$. We have
\begin{equation}
\tau_k(n+1)\equiv F_A(n)\pmod{2}.
\end{equation}
\end{theorem}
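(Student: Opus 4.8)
The plan is to derive this parity statement directly from the binomial-coefficient-weighted partition sum in Theorem~\ref{binomial-representation}(a), reducing everything modulo $2$. Starting from
\[
\tau_k(n+1)=\sum_{\substack{n=a_1^{f_1}\cdots a_r^{f_r};\\ f_i\leq k}}(-1)^{f_1+\cdots+f_r}\binom{k}{f_1}\cdots\binom{k}{f_r},
\]
I would first observe that the sign $(-1)^{f_1+\cdots+f_r}$ is irrelevant modulo $2$, since $-1\equiv 1\pmod 2$. Hence
\[
\tau_k(n+1)\equiv\sum_{\substack{n=a_1^{f_1}\cdots a_r^{f_r};\\ f_i\leq k}}\binom{k}{f_1}\cdots\binom{k}{f_r}\pmod 2,
\]
and the whole problem becomes one of deciding which summands are odd.

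The key step is the elementary fact that a product of integers is odd if and only if every factor is odd. Thus a term $\binom{k}{f_1}\cdots\binom{k}{f_r}$ contributes $1$ modulo $2$ precisely when each $\binom{k}{f_i}$ is odd, and contributes $0$ otherwise. By the definition of $A=\{a\in\mathbb N:a\leq k,\ \binom{k}{a}\equiv 1\pmod 2\}$, this happens exactly when every frequency $f_i$ lies in $A$. I would also note that since $A\subseteq\{1,\dots,k\}$, any partition all of whose frequencies lie in $A$ automatically satisfies the restriction $f_i\leq k$ present in Theorem~\ref{binomial-representation}(a), so no partitions are lost or spuriously gained in passing to this subfamily.

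Collecting the surviving terms, the right-hand side counts, modulo $2$, exactly those partitions $n=a_1^{f_1}\cdots a_r^{f_r}$ all of whose frequencies belong to $A$, each such partition contributing a single $1$. By the definition of $F_A(n)$ this count is precisely $F_A(n)$, giving $\tau_k(n+1)\equiv F_A(n)\pmod 2$.

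I do not expect a genuine obstacle here: the argument is essentially a reduction modulo $2$ of a known identity. The only point requiring care is the bookkeeping confirming that $F_A(n)$ matches the reduced sum exactly—in particular, that partition frequencies are always at least $1$ (so the value of $\binom{k}{0}$ plays no role) and that the constraint $f_i\le k$ is subsumed by membership in $A$. Once these conventions are pinned down, the congruence follows immediately.
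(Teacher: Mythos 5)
Your argument is correct and is exactly the route the paper takes: its proof of Theorem \ref{parity-gen} simply states that the result ``follows immediately'' from the representation (\ref{positive}), and your write-up supplies the omitted details (dropping the sign modulo $2$ and keeping only the terms in which every $\binom{k}{f_i}$ is odd, i.e.\ every frequency lies in $A$). No gaps; you have just made the paper's one-line proof explicit.
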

\begin{proof}
From the representation given in (\ref{positive}) of Theorem \ref{binomial-representation}, the proof follows immediately.
\end{proof}
Using Theorem \ref{parity-gen}, we obtain a parity result for the $4$-regular partition function. 
\begin{theorem}\label{parityoftau}
Let $n$ be a positive integer. We have
\begin{equation}
R_4(n)\equiv\begin{cases}1\pmod{2}&{\text { if }n=\frac{m(m+1)}{2}};\\ 
0\pmod{2}&{\text{ otherwise}}.
\end{cases}
\end{equation}
\end{theorem}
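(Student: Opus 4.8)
The plan is to route the computation of $R_4(n)\pmod 2$ through the function $\tau_3$ and then evaluate $\tau_3(n+1)\pmod 2$ by means of a classical product identity. The engine for the first reduction is Theorem \ref{parity-gen} with $k=3$; the decisive external input for the second is Jacobi's expansion of the cube of the Euler product.

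First I would apply Theorem \ref{parity-gen} at $k=3$. Since $\binom{3}{1}=\binom{3}{2}=3$ and $\binom{3}{3}=1$ are all odd, the associated set is $A=\{1,2,3\}$, so $\tau_3(n+1)\equiv F_{\{1,2,3\}}(n)\pmod 2$, where $F_{\{1,2,3\}}(n)$ counts the partitions of $n$ in which every part occurs at most three times. Next I would identify $F_{\{1,2,3\}}(n)$ with $R_4(n)$ at the level of generating functions: the generating function for $F_{\{1,2,3\}}$ is $\prod_{m=1}^{\infty}(1+q^m+q^{2m}+q^{3m})=\prod_{m=1}^{\infty}\frac{1-q^{4m}}{1-q^m}$, which is exactly the series computed in the excerpt for $\sum_{n} R_4(n)q^n$. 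Hence $R_4(n)=F_{\{1,2,3\}}(n)\equiv\tau_3(n+1)\pmod 2$. (Equivalently, one reaches the same congruence directly by noting $(1-q^m)^4\equiv 1-q^{4m}\pmod 2$, so that $\sum_{n}R_4(n)q^n=\prod_{m}\frac{1-q^{4m}}{1-q^m}\equiv\prod_{m}(1-q^m)^3\pmod 2$.)

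It then remains to determine $\tau_3(n+1)\pmod 2$, and here I would invoke Jacobi's identity $\prod_{m=1}^{\infty}(1-q^m)^3=\sum_{j=0}^{\infty}(-1)^j(2j+1)q^{j(j+1)/2}$. Multiplying by $q$ and comparing with Definition \ref{main-definition} gives $\tau_3(n+1)=(-1)^j(2j+1)$ when $n=\frac{j(j+1)}{2}$ for some $j\geq 0$, and $\tau_3(n+1)=0$ otherwise. Since $2j+1$ is odd, reduction modulo $2$ yields $\tau_3(n+1)\equiv 1$ precisely when $n$ is a triangular number and $\equiv 0$ otherwise, which, combined with $R_4(n)\equiv\tau_3(n+1)$, is the asserted criterion.

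The routine parts are the binomial parity check and the generating-function bookkeeping; the one genuinely substantive ingredient is Jacobi's triangular-number expansion of $\prod(1-q^m)^3$, which is what converts the tautological bridge $R_4(n)\equiv\tau_3(n+1)$ into the sharp triangular-number statement. I expect the main obstacle to be the realization that Theorem \ref{parity-gen} alone only restates $R_4(n)\equiv\tau_3(n+1)$ (because $F_{\{1,2,3\}}(n)=R_4(n)$), so an independent evaluation of $\tau_3$, furnished by Jacobi's identity, is unavoidable.
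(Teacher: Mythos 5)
Your proof is correct, but it takes a genuinely different route from the paper's. You reduce $R_4(n)\bmod 2$ to $\tau_3(n+1)\bmod 2$ (via Theorem \ref{parity-gen} at $k=3$, or equivalently via $(1-q^m)^4\equiv 1-q^{4m}\pmod 2$) and then finish with Jacobi's identity $\prod_{m\geq 1}(1-q^m)^3=\sum_{j\geq 0}(-1)^j(2j+1)q^{j(j+1)/2}$, whose coefficients are odd exactly at the triangular numbers. The paper instead applies Theorem \ref{parity-gen} at $k=24$, observes that $F_{\{8,16,24\}}(8m)=d_3(m)$, expands $\sum d_3(n)q^n=\prod(1+q^m)(1+q^{2m})$ to get the convolution $d_3(n)=\sum_{s}q(n-2s)q(s)$, reduces one factor modulo $2$ by Euler's pentagonal number theorem, and then cites an external evaluation of $\sum_s q(n-2s)\omega(s)$ (Theorem 3(i) of \cite{david}) to land on the triangular-number criterion. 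Your argument is shorter and entirely self-contained within the paper's toolkit, since Jacobi's identity already appears there as (\ref{JTPI}); the paper's longer detour through $\tau_{24}$ is not wasted, however, because the intermediate congruence $\tau(n+1)\equiv d_3(n/8)\pmod 2$ is precisely what powers the subsequent remark recovering Ewell's result that $\tau(n+1)$ is odd only when $n+1$ is an odd square, a byproduct your route does not produce on its own.
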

\begin{proof}
We observe that ${24\choose k}\equiv 1\pmod 2$ if, and only if, $k\in\{8,16,24\}$. 
We may now write \begin{align*} \tau(n+1)&\equiv F_{\{8,16,24\}}(n)\pmod{2}, \end{align*} in accordance with Theorem \ref{parity-gen}.
As a result, $\tau(n+1)\equiv 0\pmod{2}$ for each $n\not\equiv 0\pmod{8}$. Therefore, the $n$ such that $n\equiv 0\pmod{8}$ is our primary concern.

The partitions of $n$ that the function $F_{\{8,16,24\}}(n)$ counts when $n\equiv 0\pmod{8}$ can be expressed as follows: \[n=a_1^8a_2^{16}a_3^{24}.\]
Alternatively expressed, \[\frac{n}{8}=a_1^1a_2^{2}a_3^{3}. \]

Consequently, $F_{\{8,16,24\}}(8m)$ counts the number of partitions of $m$ whose frequencies do not exceed $3$. We denote the number of such partitions by $d_3(m)$.

As can be seen,
\begin{align}
\notag \sum_{n=0}^{\infty}d_3(n)q^n&=(1+q+q^2+q^3)(1+q^2+q^4+q^6)(1+q^3+q^6+q^9)\cdots \\ 
&=\prod_{m=1}^{\infty}\frac{1-q^{4m}}{1-q^m}\label{secondline}\\
&\notag =\prod_{m=1}^{\infty}(1+q^m)(1+q^{2m}).
\end{align}

Considering that 
\[
\sum\limits_{n=0}^{\infty}q(n)q^n=\prod_{m=1}^{\infty}(1+q^m)
\] is the generating function for the number of partitions of $n$ with distinct parts (denoted $q(n)$), the equation 
\begin{equation}\label{d3}
d_3(n)=\sum_{s=0}^{\lfloor\frac{n}{2}\rfloor}q(n-2s)q(s) 
\end{equation}
is obtained from the above chain of equalities. 

We obtain \[q(s)\equiv \omega(s)\pmod 2,\] in light of Euler's Pentagonal Number theorem. Upon substituting this in Equation (\ref{d3}), we obtain
\begin{equation}\label{d3mod2}
d_3(n)\equiv \sum_{s=0}^{\lfloor\frac{n}{2}\rfloor}q(n-2s)\omega(s)\pmod 2.
\end{equation}
In view of (i) of Theorem 3 in \cite{david}, we have
\begin{equation}\label{d3final}
\sum_{s=0}^{\lfloor\frac{n}{2}\rfloor}q(n-2s)\omega(s)=\begin{cases}
1&\text{ if $\delta_t(n)=1$};\\ 
0&\text{ otherwise,}
\end{cases}
\end{equation}
where
\[\delta_t(n)=\begin{cases}
1&\text{ if $n=\frac{m(m+1)}{2}$;}\\
0&\text{ otherwise.}
\end{cases}
\]
At this point, we can see from Equation (\ref{secondline}) that $d_3(n)=R_4(n)$. Now the result follows from (\ref{d3mod2}) and (\ref{d3final}).
\end{proof}

\begin{remark}
The congruence $d_3(n)\equiv 1\pmod 2$ is true only when $n=\frac{m(m+1)}{2}$. This suggests that $\tau(n+1)$ is odd only if $\frac{n}{8}=\frac{m(m+1)}{2}$. Since $\frac{n}{8}=\frac{m(m+1)}{2}$ simplifies to $n+1=(2m+1)^2$, it follows that $\tau(n+1)$ is odd only if $n+1$ is an odd square. This is an established result. Ewell \cite{ewell-1} previously provided a proof.
\end{remark}
\begin{remark}
The number of $4$-regular partitions of $n$ is equal to the number of partitions of $n$ with frequencies from the set $\{1,2,3\}$, as we have concluded in the previous theorem's proof. This equinumerous statement can be generalized in the way that follows: the number of $(t+1)$-regular partitions of $n$ is equal to the number of partitions of $n$ with frequencies from the set $\{1,2,\cdots, t\}$. This generalization may be validated by the subsequent equalities. If one denotes the number of partitions of $n$ with frequencies not greater than $t$ by $d_t(n)$, then:
\begin{align*}
1+\sum_{n=1}^{\infty} d_t(n)q^n&=\prod_{m=1}^{\infty}(1+q^m+q^{2m}\cdots+ q^{tm})\\
&=\prod_{m=1}^{\infty}\frac{1-q^{(t+1)m}}{1-q^m}\\
&=1+\sum_{n=1}^{\infty}R_{t+1}(n)q^n.
\end{align*}
\end{remark}
\begin{remark}
 We find that $$\tau_{2k}(2n)\equiv 0\pmod{2}$$ without the use of Theorem \ref{parity-gen}. This is deduced from a general property of $f(q)\in\mathbb{Z}[[q]]$. Denote $f(q)=a_0+a_1q+a_2q^2+\cdots $. Then \begin{align*}
f(q)^2&=a_0a_0+(a_0a_1+a_0a_1)q+(a_0a_2+a_1a_1+a_2a_0)q^2+\cdots \\
&\equiv a_0^2+a_1^2 q^2+\cdots \pmod{2}\\
&\equiv a_0+a_1q^2+\cdots \pmod{2}.
\end{align*}
This gives 
\[f(q)^2\equiv f(q^2)\pmod{2}.
\]
Consequently, we have
\[qf(q)^{2k}\equiv qf(q^2)^{k}\pmod{2}.
\]
Now plugging $\prod\limits_{m=1}^{\infty}(1-q^m)$ in place of $f(q)$, we have $\tau_{2k}(2n)\equiv 0\pmod{2}$.
\end{remark}
It is not so simple to determine the parity of $\tau_{2k}(2n+1)$. We obtain parity expressions for $\tau_{14}(2n+1)$ and $\tau_{6}(2n+1)$ in the following result.
\begin{theorem}
Let $n$ be a positive integer. We have
\begin{description}
\item{(a)} $\tau_{14}(2n+1)\equiv R_8(n)\pmod{2}$;
\item{(b)} $\tau_6(2n+1)\equiv\begin{cases}1\pmod{2}&{\text { if }n=\frac{m(m+1)}{2}};\\ 0\pmod{2}&{\text{ otherwise}}.\end{cases}$
\end{description}
\end{theorem}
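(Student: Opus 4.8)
The plan is to reduce both parts to results already in hand by combining Theorem \ref{parity-gen} with a frequency-halving bijection. The arithmetic work is essentially a binomial-parity computation followed by bookkeeping.

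First I would invoke Theorem \ref{parity-gen} to rewrite each left-hand side as a frequency-restricted partition count mod $2$. For $k=14$ this requires the set $A=\{a\le 14:\binom{14}{a}\equiv 1\pmod 2\}$. By Lucas' theorem, $\binom{14}{a}$ is odd exactly when the binary digits of $a$ form a submask of those of $14=1110_2$; since $14$ has a zero units bit, every such $a$ is even, and the remaining three bits are free, so $A=\{2,4,6,8,10,12,14\}$, the even integers from $2$ to $14$. The identical computation for $k=6=110_2$ yields $A=\{2,4,6\}$. (This explains the choice of these two values of $k$: both have the shape $2^{j}-2$, whose odd binomial entries occupy precisely the even indices.) Thus $\tau_{14}(2n+1)\equiv F_{\{2,4,6,8,10,12,14\}}(2n)\pmod 2$ and $\tau_{6}(2n+1)\equiv F_{\{2,4,6\}}(2n)\pmod 2$.

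Next I would exploit that in each case every admissible frequency is even. For part (a), take any partition $2n=a_1^{f_1}\cdots a_r^{f_r}$ counted by $F_A$; since each $f_i$ is even, writing $f_i=2g_i$ gives $g_i\in\{1,2,\ldots,7\}$ and $n=\sum_i a_i g_i$. The distinct parts $a_i$ remain distinct, so $f_i\mapsto g_i$ is a bijection between partitions of $2n$ with frequencies in $A$ and partitions of $n$ with every frequency at most $7$; hence $F_A(2n)=d_7(n)$. By the equinumerous statement $d_t(n)=R_{t+1}(n)$ recorded in the remark following Theorem \ref{parityoftau} (with $t=7$), this gives $F_A(2n)=R_8(n)$, establishing (a).

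For part (b) the same halving sends the frequency set $\{2,4,6\}$ to $\{1,2,3\}$, so $F_{\{2,4,6\}}(2n)=d_3(n)$. But $d_3(n)=R_4(n)$ was already identified inside the proof of Theorem \ref{parityoftau}, where its parity was shown to equal $1$ precisely when $n=\tfrac{m(m+1)}{2}$; invoking that theorem closes (b). The only step carrying any content is the Lucas-theorem determination of $A$, and I expect the main care to lie in verifying that the halving map is a genuine bijection — each even frequency halves uniquely, doubling returns a frequency back into $A$, and distinctness of parts is preserved — though this is routine once $A$ is seen to consist of consecutive even numbers.
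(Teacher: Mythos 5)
Your proposal is correct and follows essentially the same route as the paper: apply Theorem \ref{parity-gen}, observe that the admissible frequency sets consist of even numbers, halve the frequencies to get $d_7(n)=R_8(n)$ for (a) and $d_3(n)=R_4(n)$ for (b), and finish (b) with the parity result of Theorem \ref{parityoftau}. The only cosmetic difference is that you justify the sets $A$ via Lucas' theorem, whereas the paper states them by direct computation.
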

\begin{proof}
Define $A=\{1\leq a_i\leq 14:{14\choose{a_i}}\equiv 1\pmod{2}\}=\{2,4,6,8,10,12,14\}$.
Considering Theorem \ref{parity-gen}, we can now write
\begin{equation*}\label{tau14mod2}
\tau_{14}(n+1)\equiv F_{\{2,4,6,8,10,12,14\}}(n)\pmod{2}.
\end{equation*}
All partitions of $n$ that $F_{\{2,4,6,8,10,12,14\}}(n)$ counts while $n$ is even are of the type
\[n=a_1^2a_2^4a_3^6a_4^8a_5^{10}a_6^{12}a_7^{14}.
\]
Alternatively expressed,
\[\frac{n}{2}=a_1^1a_2^2a_3^3a_4^4a_5^5a_6^6a_7^7.
\]
As a result, $F_{\{2,4,6,8,10,12,14\}}(n)$ counts the number of $\frac{n}{2}$ partitions with frequencies that do not exceed $7$. To put it another way, $F_{\{2,4,6,8,10,12,14\}}(n)=d_7(\frac{n}{2})=R_8(\frac{n}{2})$. Thus, considering the even $n$, we have
\begin{align*}
\tau_{14}(n+1)& \equiv F_{\{2,4,6,8,10,12,14\}}(n)\pmod{2}\\
& \equiv R_8\left(\frac{n}{2}\right)\pmod{2}.
\end{align*}
Now (a) follows.
A similar search together with the parity result of $R_4(n)$ (mentioned in Theorem \ref{parityoftau}) gives (b).
\end{proof}

The following parity result relates $R_{2^s}(n)$ and $\tau_{2^s-1}(n)$. This follows from a parity result concerning binomial coefficients.
\begin{theorem}
For every positive integer $s$, we have
\begin{equation}
R_{2^s}(n)\equiv\tau_{2^s-1}(n+1)\pmod 2.
\end{equation}
\end{theorem}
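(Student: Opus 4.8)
The plan is to invoke Theorem \ref{parity-gen} with $k=2^s-1$ and then identify the resulting partition count with $R_{2^s}(n)$. Setting $k=2^s-1$, I must first determine the set $A=\{a\in\mathbb N:a\leq 2^s-1,\ \binom{2^s-1}{a}\equiv 1\pmod 2\}$ on which that theorem depends. The entire argument hinges on showing that this set is as large as possible, namely $A=\{1,2,\ldots,2^s-1\}$.

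The key step, and the only substantive one, is the parity statement $\binom{2^s-1}{a}\equiv 1\pmod 2$ for every $a$ with $0\leq a\leq 2^s-1$. This is precisely the binomial-coefficient parity result alluded to in the statement, and it follows at once from Lucas' theorem: writing $a$ in binary as $a=\sum_i a_i 2^i$ with $a_i\in\{0,1\}$, and noting that $2^s-1$ is a base-$2$ repunit, with expansion $\underbrace{1\cdots1}_{s}$ so that every binary digit equals $1$, Lucas' congruence gives $\binom{2^s-1}{a}\equiv\prod_i\binom{1}{a_i}\equiv 1\pmod 2$, since $\binom{1}{0}=\binom{1}{1}=1$. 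Hence no binomial coefficient in the row indexed by $2^s-1$ of Pascal's triangle is even, and $A=\{1,2,\ldots,2^s-1\}$ as claimed (the value $a=0$ is odd as well but is excluded since frequencies are positive).

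With $A$ determined, Theorem \ref{parity-gen} yields $\tau_{2^s-1}(n+1)\equiv F_{\{1,2,\ldots,2^s-1\}}(n)\pmod 2$. By definition $F_{\{1,2,\ldots,2^s-1\}}(n)$ counts the partitions of $n$ all of whose frequencies lie between $1$ and $2^s-1$, that is, the partitions with frequencies not exceeding $2^s-1$; in the notation of the remark following Theorem \ref{parityoftau} this is $d_{2^s-1}(n)$. Finally I would apply the equinumerosity identity $d_t(n)=R_{t+1}(n)$ recorded there, taken with $t=2^s-1$, to obtain $d_{2^s-1}(n)=R_{2^s}(n)$. Chaining these equalities produces $\tau_{2^s-1}(n+1)\equiv R_{2^s}(n)\pmod 2$, completing the proof.

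Since every step after the Lucas computation is a direct substitution into results already available in the excerpt, I do not anticipate any serious obstacle; the one point requiring care is the binary-digit argument, and in particular the observation that $2^s-1$ consists entirely of $1$'s in binary, which is exactly what forces the whole row of Pascal's triangle to be odd and hence makes $A$ the full interval $\{1,\ldots,2^s-1\}$.
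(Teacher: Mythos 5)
Your proof is correct and follows essentially the same route as the paper: both arguments rest on the single key fact that every entry in row $2^s-1$ of Pascal's triangle is odd, and then identify the resulting frequency-bounded partition count with $R_{2^s}(n)$. The only cosmetic differences are that the paper establishes the binomial parity by iterating Glaisher's mod-2 reduction $\binom{n}{k}\equiv\binom{\lfloor n/2\rfloor}{\lfloor k/2\rfloor}$ rather than quoting Lucas' theorem (these are equivalent), and it carries out the final identification directly on the generating function $\prod_m(1+q^m+\cdots+q^{(2^s-1)m})=\prod_m\frac{1-q^{2^sm}}{1-q^m}$ instead of citing Theorem \ref{parity-gen} and the remark's identity $d_t(n)=R_{t+1}(n)$.
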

\begin{proof}
We note that $\frac{2^s-2}{2}=2^{s-1}-1$ is the largest integer that does not exceed $\frac{2^s-1}{2}$. Stated in another way, 
\[
\lfloor\frac{2^s-1}{2}\rfloor=2^{s-1}-1. 
\]
Using the result of James Glaisher \cite{james}, we obtain
\begin{equation}\label{glaisher}
{n\choose{k}}\equiv\begin{cases}
0\pmod{2}&\text{ if $n$ is even and $k$ is odd;}\\
\binom{\lfloor\frac{n}{2}\rfloor}{\lfloor\frac{k}{2}\rfloor}\pmod{2}&\text{ otherwise.}
\end{cases}
\end{equation}
We obtain the following congruence relation by substituting $2^s-1$ for $n$ in (\ref{glaisher}): 
\[{{2^s-1}\choose{k}}\equiv{{2^{s-1}-1}\choose{\lfloor\frac{k}{2}\rfloor}}\pmod{2}.
\]
After using the previously specified modulo 2 reduction $s-1$ times, taking the right-side term for subsequent reduction, we obtain
\[{{2^s-1}\choose{k}}\equiv1\pmod{2}.
\]
Now that the aforementioned observation has been made, we may write
\begin{align*}
\sum_{n=0}^{\infty}\tau_{2^s-1}(n+1)q^n&=\prod_{m=1}^{\infty}(1-q^m)^{2^s-1}\\
&\equiv\prod_{m=1}^{\infty}(1+q^m+\cdots +q^{2^{{(s-1)}m}})\pmod{2}\\ 
&\equiv\prod_{m=1}^{\infty}\frac{1-q^{2^sm}}{1-q^m}\pmod{2}\\
&\equiv\sum_{n=0}^{\infty}R_{2^s}(n)q^n\pmod{2}.
\end{align*}
The proof is now completed.
\end{proof}

\subsection{ Ramanujan's Tau Function Modulo $3$, $5$, $7$, $11$, $13$, $17$, $23$ and $25$}
This section is concerned with deriving a simple expression for $\tau(n)$ modulo $m$ when $m\in\{3,5,7,11,13,17,23,25\}$. The derivations of this section just rely on some arithmetic properties of ${24\choose{s}}$.
\begin{theorem}\label{taumodulo3}
Let $n$ be a positive integer. We have
\begin{equation}
\tau(n+1)\equiv\begin{cases}
R_9\left(\frac{n}{3}\right)\pmod 3&\text{ if $3\mid n$};\\ 
0\pmod 3&\text{ otherwise}.
\end{cases}
\end{equation}
\end{theorem}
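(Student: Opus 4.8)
The plan is to work entirely at the level of the generating function $\sum_{n\geq 0}\tau(n+1)q^n = \prod_{m=1}^\infty (1-q^m)^{24}$ and to reduce this product modulo $3$ into the generating function for $9$-regular partitions recorded at the start of this section. The engine is the Frobenius congruence $g(q)^3 \equiv g(q^3)\pmod 3$ valid for any $g\in\mathbb Z[[q]]$, which is the mod-$3$ analogue of the identity $f(q)^2\equiv f(q^2)\pmod 2$ exploited in one of the preceding remarks.

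First I would write $24 = 3\cdot 8$ and apply the Frobenius congruence factor by factor, obtaining
\[
(1-q^m)^{24} = \big((1-q^m)^3\big)^8 \equiv (1-q^{3m})^8 \pmod 3.
\]
Rather than expanding the eighth power by hand, I would reintroduce a ninth factor, writing $(1-q^{3m})^8 = (1-q^{3m})^9 / (1-q^{3m})$, and apply Frobenius twice to the numerator to get $(1-q^{3m})^9 \equiv 1 - q^{27m}\pmod 3$. This produces the clean intermediate congruence
\[
(1-q^{3m})^8 \equiv \frac{1-q^{27m}}{1-q^{3m}} \pmod 3.
\]

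Taking the product over $m$ then gives
\[
\prod_{m=1}^\infty (1-q^m)^{24} \equiv \prod_{m=1}^\infty \frac{1-q^{27m}}{1-q^{3m}}\pmod 3,
\]
and the substitution $Q = q^3$ turns the right-hand side into $\prod_{m=1}^\infty \frac{1-Q^{9m}}{1-Q^m} = \sum_{n\geq 0} R_9(n)Q^n$, using the $t$-regular partition generating function with $t=9$. Comparing coefficients of $q^n$: a coefficient survives only when $3\mid n$, in which case $\tau(n+1)\equiv R_9(n/3)\pmod 3$, while all other coefficients vanish modulo $3$. This is exactly the claimed dichotomy.

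The only place where care is required, and where the stated ``arithmetic properties of $\binom{24}{s}$'' enter, is justifying that the middle coefficients collapse so neatly. One could instead expand $(1-q^m)^{24}$ directly and invoke Lucas's theorem with $24 = (220)_3$, which shows $\binom{24}{s}\equiv 0\pmod 3$ unless $3\mid s$ and pins down the surviving coefficients; this recovers the same geometric-sum factor $1+q^{3m}+\cdots+q^{24m}$. I expect the main (and minor) obstacle to be the bookkeeping of exponents through the substitution $Q=q^3$, so that the argument of $R_9$ emerges as $n/3$ rather than $n$; once the Frobenius reductions are in place the arithmetic is forced.
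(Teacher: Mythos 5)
Your proof is correct, and it reaches exactly the same intermediate product as the paper, namely $\prod_{m\ge 1}\frac{1-q^{27m}}{1-q^{3m}}$, but by a different mechanism. The paper computes the residues of $\binom{24}{k}$ modulo $3$ directly (zero unless $3\mid k$, and then alternating so that the signed coefficients all become $+1$), collapsing each factor to the geometric sum $1+q^{3m}+\cdots+q^{24m}=\frac{1-q^{27m}}{1-q^{3m}}$; you instead invoke the Frobenius congruence $g(q)^3\equiv g(q^3)\pmod 3$ to get $(1-q^m)^{24}\equiv(1-q^{3m})^8$ and then divide the unit $1-q^{3m}$ out of $(1-q^{3m})^9\equiv 1-q^{27m}$. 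Your route is slicker and is the natural mod-$3$ analogue of the identity $f(q)^2\equiv f(q^2)\pmod 2$ that the paper itself uses in an earlier remark; it also makes transparent why the binomial coefficients collapse (your Lucas-theorem aside with $24=(220)_3$ is essentially the paper's computation in disguise). What the paper's more pedestrian approach buys is uniformity: the same explicit $\binom{24}{k}$-residue bookkeeping carries over to the moduli $7$, $11$, $13$, $17$, $19$, $23$, $25$ treated in the same subsection, where no Frobenius collapse of the full exponent $24$ is available. The final step --- reading off that the coefficient of $q^n$ in $\sum_n R_9(n)q^{3n}$ is $R_9(n/3)$ when $3\mid n$ and $0$ otherwise --- is handled correctly in both arguments.
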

\begin{proof}
Given the following observations:
\begin{enumerate}
\item
${24\choose{k}}\equiv0\pmod{3}$ 
when $k\in\{1,2,4,5,7,8,10,11,13,14,16,17,19,20,22,23\}$,
\item ${24\choose{k}}\equiv-1\pmod{3}$
when $k\in\{3,9,15,21\}$,
\item ${24\choose{k}}\equiv1\pmod{3}$ when $k\in\{6,12,18,24\}$,
\end{enumerate}
we may write
\begin{align*}
\sum_{n=0}^{\infty}\tau(n+1)q^n
&\equiv\prod_{m=1}^{\infty}(1+q^{3m}+q^{6m}+\cdots +q^{24m})\pmod{3}\\
&\equiv\prod_{m=1}^{\infty}\frac{1-q^{9\times 3m}}{1-q^{3m}}\pmod{3}.
\end{align*}
Since 
\[\prod_{m=1}^{\infty}\frac{1-q^{9m}}{1-q^m}=\sum_{n=0}^{\infty}R_9(n)q^n,
\]
in view of the above observation, we obtain the following congruence:
\begin{equation*}
\tau(n+1)\equiv\begin{cases}
R_9\left(\frac{n}{3}\right)\pmod 3&\text{ if $3\mid n$},\\ 
0\pmod 3&\text{ otherwise}.
\end{cases}
\end{equation*}
\end{proof}
As an immediate consequence of the theorem above, we obtain the following result of Ramanujan.
\begin{corollary}[Ramanujan \cite{ramanujan-20}]
Let $n$ be a positive integer. We have
\begin{equation*}
\tau(3n)\equiv 0\pmod{3}.
\end{equation*}
\end{corollary}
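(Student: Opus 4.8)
The plan is to read the corollary directly off Theorem~\ref{taumodulo3} by a single index shift, with no further machinery required. The theorem is stated in terms of $\tau(N+1)$ for a positive integer $N$, whereas the corollary concerns $\tau(3n)$. So the first step is to match the two indexings: set $N = 3n-1$, so that $\tau(3n) = \tau(N+1)$.

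The second step is to decide which branch of Theorem~\ref{taumodulo3} applies, which reduces to testing the divisibility of $N$ by $3$. Here $N = 3n-1 \equiv 2 \pmod{3}$, so $3 \nmid N$, and we fall into the ``otherwise'' case of the theorem. That case asserts exactly $\tau(N+1) \equiv 0 \pmod{3}$; substituting $N = 3n-1$ back in yields $\tau(3n) \equiv 0 \pmod{3}$, which is the claim.

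There is genuinely no obstacle in this argument; it is a one-line deduction once Theorem~\ref{taumodulo3} is in hand. The only point demanding care is the off-by-one bookkeeping in the index. It is tempting to divide the argument $3n$ by $3$ directly, but because the theorem is indexed as $\tau(N+1)$, the residue that matters is that of $N = 3n-1$ rather than of $3n$ itself. Once one observes that $3n-1$ is never divisible by $3$, the nonvanishing (``$R_9$'') branch can never be triggered for these arguments, and the congruence is forced by the vanishing branch.
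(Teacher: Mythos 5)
Your argument is correct and is exactly the paper's proof: the paper likewise writes $\tau(3n)=\tau((3n-1)+1)$ and invokes the ``otherwise'' branch of Theorem~\ref{taumodulo3} since $3\nmid 3n-1$. No differences to report.
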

\begin{proof}
Theorem \ref{taumodulo3} allows us to write
\begin{align*}
\tau(3n)&=\tau(3n-1+1)\\
&\equiv 0\pmod{3}.
\end{align*}
\end{proof}
As another consequence of Theorem \ref{taumodulo3}, we obtain the following expression for $R_9(n)$ modulo $3$. 
\begin{corollary}
Let $n$ be a positive integer. We have
\begin{equation}
R_9(n)\equiv \sigma(3n+1)\pmod{3}.
\end{equation}
\end{corollary}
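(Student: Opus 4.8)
The plan is to combine Theorem~\ref{taumodulo3} with the sharper arithmetic of $\tau$ on the residue class $1$ modulo $3$. First I would specialise Theorem~\ref{taumodulo3} by replacing $n$ with $3n$: since $3\mid 3n$ and $\frac{3n}{3}=n$, the theorem yields
\[
\tau(3n+1)\equiv R_9(n)\pmod 3 .
\]
Thus the corollary is equivalent to the single congruence $\tau(3n+1)\equiv\sigma(3n+1)\pmod 3$, so it suffices to show that $\tau$ and $\sigma$ agree modulo $3$ on every $m\equiv 1\pmod 3$. Equivalently one wants $\tau(m)\equiv m\,\sigma(m)\pmod 3$ for all $m$: the two sides coincide when $m\equiv1$, both vanish when $m\equiv0$, and the case $m\equiv2$ is covered by the elementary multiplicativity fact that $\sigma(m)\equiv0\pmod 3$ whenever $m\equiv2\pmod 3$ (a prime power $p^a\parallel m$ with $p\equiv2$, $a$ odd must occur, and then $1+p+\cdots+p^a\equiv0$).

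For the key congruence I would reuse the logarithmic-derivative computation of Section~2. Applying $q\frac{d}{dq}\log(\cdot)$ to $q\prod_{m\ge1}(1-q^m)^{24}$ and invoking the Lambert series for $\sigma$, exactly as in the derivation of (\ref{recurrence}), gives the exact integer identity
\[
(N-1)\tau(N)=-24\sum_{i=1}^{N-1}\tau(i)\,\sigma(N-i).
\]
Reducing modulo $9$ (so that $-24\equiv 3\pmod 9$), setting $N=3n+1$, dividing the resulting congruence by $3$, and discarding the terms with $\tau(i)\equiv0\pmod 3$ (those with $i\not\equiv1\pmod 3$, by Theorem~\ref{taumodulo3}) would leave a convolution
\[
n\,\tau(3n+1)\equiv\sum_{j=0}^{n-1}\tau(3j+1)\,\sigma\!\left(3(n-j)\right)\pmod 3 .
\]
This determines $\tau(3n+1)\bmod 3$ inductively from smaller values whenever $3\nmid n$.

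The main obstacle is exactly the degeneracy of this recurrence at the indices $n\equiv0\pmod 3$, where the leading coefficient $n$ is not invertible modulo $3$ and the relation no longer pins down $\tau(3n+1)$ on its own. Closing this gap is the genuinely non-elementary content, and it amounts to the classical Ramanujan-type congruence $\tau(m)\equiv m\,\sigma(m)\pmod 3$: a congruence modulo $3$ between the weight-$12$ form $q\prod(1-q^m)^{24}$ and the weight-$4$ quasimodular series $\sum_{m}m\,\sigma(m)\,q^{m}$. I would supply it either by citing it directly, or by tracking the powers of $3$ in the identity $E_4^3-E_6^2=1728\,q\prod(1-q^m)^{24}$ together with Ramanujan's differential relations for the Eisenstein series $E_2,E_4,E_6$; the factor $1728=2^6\cdot 3^3$ is what makes this bookkeeping delicate, since recovering $\tau\pmod 3$ requires controlling the numerator modulo $3^4$. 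Once the key congruence is established, the corollary follows immediately from the chain $R_9(n)\equiv\tau(3n+1)\equiv\sigma(3n+1)\pmod 3$.
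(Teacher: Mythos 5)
Your proof is correct and, at its core, is the same as the paper's: both combine Theorem~\ref{taumodulo3} (giving $R_9(n)\equiv\tau(3n+1)\pmod 3$) with the classical Ramanujan congruence $\tau(m)\equiv m\,\sigma(m)\pmod 3$, which the paper simply cites. Your detour through the logarithmic-derivative recurrence is a correct computation but, as you yourself note, cannot determine $\tau(3n+1)\bmod 3$ when $3\mid n$, so the cited congruence is doing the real work in both arguments.
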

\begin{proof}
We have 
\[\tau(n)\equiv n\sigma(n)\pmod{3}
\]
from the works of Ramanujan \cite[p. 112]{george}. 
It follows therefrom that
\[\tau(n)\equiv\begin{cases}
0\pmod{3}&\text{ if }3\mid n;\\
\sigma(n)\pmod{3}&\text{ if }\gcd(n,3)=1.
\end{cases}
\]
Given the above observation, Theorem \ref{taumodulo3} allows us to write
\begin{align*}
R_9(n)&\equiv \tau(3n+1)\pmod{3}\\
&\equiv \sigma(3n+1)\pmod{3}.
\end{align*}
\end{proof}
\begin{theorem}\label{taumodulo5}
Let $n$ be a positive integer. We have
\begin{equation}
\tau(n+1)\equiv R_{25}(n)\pmod{5}.
\end{equation}
\end{theorem}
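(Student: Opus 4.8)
The plan is to follow the same strategy as the proof of Theorem~\ref{taumodulo3}, replacing the reduction of $\binom{24}{k}$ modulo $3$ by its reduction modulo $5$. Starting from
\[
\sum_{n=0}^{\infty}\tau(n+1)q^{n}=\prod_{m=1}^{\infty}(1-q^{m})^{24},
\]
the aim is to show that the right-hand product collapses, modulo $5$, to the generating function $\prod_{m=1}^{\infty}\frac{1-q^{25m}}{1-q^{m}}=\sum_{n=0}^{\infty}R_{25}(n)q^{n}$, after which comparing coefficients of $q^{n}$ yields the claim.

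The core of the argument is the single arithmetic fact
\[
\binom{24}{k}\equiv(-1)^{k}\pmod{5}\qquad\text{for }0\le k\le 24,
\]
equivalently $(-1)^{k}\binom{24}{k}\equiv 1\pmod 5$ throughout that range. I would establish this by Lucas' theorem: writing $24=(4,4)_{5}$ and $k=5k_{1}+k_{0}$ with $0\le k_{0},k_{1}\le 4$ gives $\binom{24}{k}\equiv\binom{4}{k_{1}}\binom{4}{k_{0}}\pmod 5$, and since $(-1)^{j}\binom{4}{j}\equiv 1\pmod 5$ for each $j\in\{0,1,2,3,4\}$ (the values being $1,-4,6,-4,1$), the product of the two factors together with $(-1)^{k}=(-1)^{k_{0}+k_{1}}$ is $\equiv 1\pmod 5$. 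The same fact can be read off directly from $(1-x)^{24}=(1-x)^{25}(1-x)^{-1}\equiv(1-x^{25})(1-x)^{-1}\pmod 5$, using the ``freshman's dream'' $(1-x)^{5}\equiv 1-x^{5}\pmod 5$ applied twice.

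Feeding this into the expansion of each factor, every term $(-1)^{k}\binom{24}{k}q^{km}$ contributes $+1$ modulo $5$, so
\[
(1-q^{m})^{24}\equiv\sum_{k=0}^{24}q^{km}=\frac{1-q^{25m}}{1-q^{m}}\pmod 5.
\]
Taking the product over $m$ then gives
\[
\prod_{m=1}^{\infty}(1-q^{m})^{24}\equiv\prod_{m=1}^{\infty}\frac{1-q^{25m}}{1-q^{m}}=\sum_{n=0}^{\infty}R_{25}(n)q^{n}\pmod 5,
\]
and equating the coefficients of $q^{n}$ completes the proof.

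The only real content is the congruence $\binom{24}{k}\equiv(-1)^{k}\pmod 5$; once it is in hand, the remaining manipulation is exactly the telescoping already used to identify $R_{9}$ and $R_{2^{s}}$ earlier in the section, so I expect no further obstacle. It is worth noting that this case is cleaner than the modulo-$3$ case of Theorem~\ref{taumodulo3}: there $24=27-3$ forced a rescaling of the argument and hence the case split on $3\mid n$, whereas here $24=25-1$ produces $R_{25}(n)$ directly with no case distinction.
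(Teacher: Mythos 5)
Your proposal is correct and follows essentially the same route as the paper: both reduce to the single fact that $(-1)^k\binom{24}{k}\equiv 1\pmod 5$ for $0\le k\le 24$, so that $(1-q^m)^{24}\equiv\frac{1-q^{25m}}{1-q^m}\pmod 5$ and the product becomes the generating function of $R_{25}(n)$. The only difference is cosmetic: the paper simply lists the residues of $\binom{24}{k}$ modulo $5$, while you derive them via Lucas' theorem (or the identity $(1-x)^{25}\equiv 1-x^{25}\pmod 5$), which is a cleaner justification of the same step.
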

\begin{proof}
We observe that
\begin{enumerate}
\item ${24\choose{k}}\equiv1\pmod{5}$ when $k\in\{0,2,4,6,8,10,12,14,16,18,20,22,24\}$,
\item ${24\choose{k}}\equiv-1\pmod{5}$ when $k\in\{1,3,5,7,9,11,13,15,17,19,21,23\}$.
\end{enumerate}
Based on these observations, we can write
\begin{align*}
\sum_{n=0}^{\infty}\tau(n+1)q^{n}
&\equiv\prod_{m=1}^{\infty}(1+q^m+q^{2m}+\cdots +q^{24m})\pmod{5}\\
&\equiv\prod_{m=1}^{\infty}\frac{1-q^{25m}}{1-q^m}\pmod{5}.
\end{align*}
Since 
\[\prod_{m=1}^{\infty}\frac{1-q^{25m}}{1-q^m}=\sum_{n=0}^{\infty}R_{25}(n)q^n,
\]
we obtain from the above observation that
\[\tau(n+1)\equiv R_{25}(n)\pmod{5}.
\]
\end{proof}
An expression for $R_{25}$ modulo $5$ can be obtained by applying the aforementioned theorem.
\begin{corollary}
Let $n$ be a positive integer. We have
\begin{equation}
R_{25}(n)\equiv (n+1)\sigma(n+1)\pmod{5}.
\end{equation}
\end{corollary}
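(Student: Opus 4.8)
The plan is to mirror the argument already used for the modulo~$3$ analogue. The starting point is Theorem~\ref{taumodulo5}, which supplies
\[
R_{25}(n)\equiv\tau(n+1)\pmod 5,
\]
so the entire task reduces to re-expressing $\tau(n+1)$ modulo $5$ in terms of elementary arithmetic functions, exactly as the preceding corollary did with the congruence $\tau(m)\equiv m\sigma(m)\pmod 3$.

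The key ingredient I would invoke is the classical Ramanujan congruence
\[
\tau(m)\equiv m\,\sigma_9(m)\pmod 5,
\]
available from the same source \cite{george} cited in the modulo~$3$ case. I would then collapse the exponent by Fermat's little theorem: for every divisor $d$ of $m$ one has $d^9\equiv d\pmod 5$ (trivially when $5\mid d$, and via $d^4\equiv 1$ otherwise), whence
\[
\sigma_9(m)=\sum_{d\mid m}d^9\equiv\sum_{d\mid m}d=\sigma(m)\pmod 5.
\]
This yields the cleaner statement $\tau(m)\equiv m\,\sigma(m)\pmod 5$, the exact analogue of the modulo~$3$ input.

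Putting these together, I would set $m=n+1$ and chain the two congruences:
\[
R_{25}(n)\equiv\tau(n+1)\equiv(n+1)\,\sigma(n+1)\pmod 5,
\]
which is precisely the claimed identity.

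There is essentially no combinatorial obstacle here; the lone point requiring care is the provenance of $\tau(m)\equiv m\,\sigma_9(m)\pmod 5$. Should one prefer a self-contained derivation rather than a citation, the real work would shift to proving that congruence directly---for instance by comparing the $q$-expansion of $q\prod_{m\ge 1}(1-q^m)^{24}$ against a suitable Eisenstein-type series modulo $5$. But since the modulo~$3$ corollary already quotes the analogous $\tau(m)\equiv m\,\sigma(m)\pmod 3$ from \cite{george}, the consistent and economical route is simply to cite \cite{george} once more and perform the Fermat reduction of $\sigma_9$ to $\sigma$ recorded above.
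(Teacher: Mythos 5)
Your proposal is correct and follows essentially the same route as the paper: the paper simply cites Wilton's congruence $\tau(n)\equiv n\sigma(n)\pmod 5$ directly and chains it with Theorem \ref{taumodulo5}, whereas you reach the same congruence by starting from $\tau(n)\equiv n\sigma_9(n)\pmod 5$ and reducing $\sigma_9$ to $\sigma$ via Fermat's little theorem. That extra reduction is a valid but inessential detour; the argument is otherwise identical.
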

\begin{proof}
Wilton \cite{wilton} established that 
\begin{equation*}
\tau(n)\equiv n\sigma(n)\pmod{5}.
\end{equation*}
We may now write in light of Theorem \ref{taumodulo5}:
\begin{align*}
R_{25}(n)&\equiv \tau(n+1)\pmod{5}\\
&\equiv (n+1)\sigma(n+1)\pmod{5}.
\end{align*}
\end{proof}
\begin{theorem}\label{taumodulo7}
Let $n$ be a positive integer. We have
\begin{equation}
\tau(n+1)\equiv \sum_{n=\frac{m(m+1)}{2}+7\frac{r(r+1)}{2}}(-1)^{m+r}(2m+1)(2r+1)\pmod{7}.
\end{equation}
\end{theorem}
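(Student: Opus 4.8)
The plan is to reduce $\prod_{m=1}^{\infty}(1-q^m)^{24}$ modulo $7$ in the same spirit as Theorems \ref{taumodulo3} and \ref{taumodulo5}, except that instead of landing on a regular-partition generating function I will exploit a classical cube-power identity. The starting point is the Frobenius-type congruence $(1-q^m)^{7}\equiv 1-q^{7m}\pmod 7$, which holds because $(1-x)^{7}\equiv 1-x^{7}\pmod 7$ as polynomials (all intermediate binomial coefficients $\binom{7}{k}$ with $0<k<7$ vanish mod $7$). Writing $24=3\cdot 7+3$, this yields
\begin{align*}
\sum_{n=0}^{\infty}\tau(n+1)q^n
&=\prod_{m=1}^{\infty}(1-q^m)^{24}
=\prod_{m=1}^{\infty}(1-q^m)^{21}(1-q^m)^{3}\\
&\equiv\prod_{m=1}^{\infty}(1-q^{7m})^{3}(1-q^m)^{3}\pmod 7.
\end{align*}

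Next I would invoke Jacobi's classical identity for the cube of Euler's product,
\[
\prod_{m=1}^{\infty}(1-q^m)^{3}=\sum_{j=0}^{\infty}(-1)^{j}(2j+1)q^{\frac{j(j+1)}{2}},
\]
applied once with argument $q$ and once with argument $q^{7}$, to convert each cubed product into an explicit theta-like series. Forming the Cauchy product of the two series then gives
\[
\prod_{m=1}^{\infty}(1-q^m)^{3}(1-q^{7m})^{3}=\sum_{m,r\geq 0}(-1)^{m+r}(2m+1)(2r+1)\,q^{\frac{m(m+1)}{2}+7\frac{r(r+1)}{2}}.
\]
Comparing the coefficient of $q^{n}$ on both sides (recall $\sum_{n}\tau(n+1)q^{n}=\prod_{m}(1-q^m)^{24}$) produces precisely the asserted congruence.

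The only substantive ingredient is Jacobi's identity for the cube of Euler's function; the remaining steps — the mod $7$ reduction via $(1-q^m)^{7}\equiv 1-q^{7m}$ and the multiplication of the two resulting series — are entirely mechanical. Thus the main point is to recognize that the normalization $24\equiv 3\pmod 7$ forces a \emph{cube} (rather than a regular-partition product), so that the double theta series, and not a $t$-regular partition function, is the natural target; once Jacobi's identity is supplied, equating coefficients of $q^{n}$ closes the argument.
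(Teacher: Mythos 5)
Your proof is correct, and it reaches the same intermediate congruence as the paper --- namely $\prod_{m\geq 1}(1-q^m)^{24}\equiv\prod_{m\geq 1}(1-q^m)^3(1-q^{7m})^3\pmod 7$, followed by Jacobi's identity for the cube of Euler's product applied at $q$ and at $q^7$ and a Cauchy product --- but you get to that congruence by a genuinely different and slicker route. The paper computes $\binom{24}{k}$ modulo $7$ for all $0\leq k\leq 24$, writes out the resulting degree-$24$ polynomial in $q^m$, and then factors it by hand into $\left(1-3q^m+3q^{2m}-q^{3m}\right)\left(1-3q^{7m}+3q^{14m}-q^{21m}\right)$. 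You instead write $24=3\cdot 7+3$ and use $(1-x)^7\equiv 1-x^7\pmod 7$ to collapse $(1-q^m)^{21}$ to $(1-q^{7m})^3$ in one line. Your derivation buys brevity and, more importantly, transparency and generality: the same decomposition $24=ap+b$ with $0\leq b<p$ immediately explains every one of the paper's prime-modulus reductions in this subsection (e.g., $24=2\cdot 11+2$ gives $(1-q^m)^2(1-q^{11m})^2$ mod $11$, $24=13+11$ gives $(1-q^m)^{11}(1-q^{13m})$ mod $13$, $24=23+1$ gives $(1-q^m)(1-q^{23m})$ mod $23$), whereas the paper's explicit binomial tables must be recomputed for each prime and obscure why the factorization works. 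The paper's approach does have the minor virtue of exhibiting the residues $\binom{24}{k}\bmod 7$ explicitly, which it reuses nowhere essential here. One small point of rigor in your write-up: you should note that the double series is a legitimate rearrangement (each exponent $\frac{m(m+1)}{2}+7\frac{r(r+1)}{2}$ is attained by only finitely many pairs $(m,r)$, so the coefficient of $q^n$ in the product is a finite sum), but this is the same tacit step the paper takes.
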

\begin{proof}
We observe that
\[
{24\choose{k}}\equiv \begin{cases}
1\pmod{7}&\text{ if $k$=0, 3, 21, 24;}\\ 
0\pmod{7}&\text{ if $k$=4, 5, 6, 11, 12, 13, 18, 19, 20;}\\
3\pmod{7}&\text{ if $k$=1, 2, 22, 23;}\\
-4\pmod{7}&\text{ if $k$=7, 10, 14, 17;}\\
-12\pmod{7}&\text{ if $k$=8, 9, 25, 16.}\\
\end{cases}
\]
Based on this observation, we can write 
\begin{align*}
\sum_{n=0}^{\infty}\tau(n+1)q^n
&\equiv\prod_{m=1}^{\infty} \left[\left(1-3q^m+3q^{2m}-q^{3m}\right)+4\left(q^{7m}-3q^{8m}+3q^{9m}-q^{10m}\right)\right.\\
&\text{\ \ \ \ \ \ \ \ }-4\left(q^{14m}-3q^{15m}+3q^{16m}-q^{17m}\right)\\
&\text{\ \ \ \ \ \ \ \ }\left.-\left(q^{21m}-3q^{22m}+3q^{23m}-q^{24m}\right)\right]\pmod{7}
\end{align*}
\begin{align*}
\ \ \ \ \ \ \ \ \ \ \ \ \ \ \ \ \ \ \ \ &\equiv\prod_{m=1}^{\infty}\left[\left(1-3q^m+3q^{2m}-q^{3m}\right)-3\left(q^{7m}-3q^{8m}+3q^{9m}-q^{10m}\right)\right.\\
&\text{\ \ \ \ \ \ \ \ }+3\left(q^{14m}-3q^{15m}+3q^{16m}-q^{17m}\right)\\
&\text{\ \ \ \ \ \ \ \ }\left.-\left(q^{21m}-3q^{22m}+3q^{23m}-q^{24m}\right)\right]\pmod{7}\\
&\equiv\prod_{m=1}^{\infty}\left[\left(1-3q^m+3q^{2m}-q^{3m}\right)\left(1-3q^{7m}+3q^{14m}-q^{21m}\right)\right]\pmod{7}\\ 
&\equiv\prod_{m=1}^{\infty}\left(1-q^{m}\right) ^3 \prod_{r=1}^{\infty}\left(1-q^{7r}\right)^3\pmod{7}.
\end{align*}
Jacobi's triple product identity states that
\begin{equation}\label{JTPI}
\prod_{n=1}^{\infty}(1-q^n)^3=\sum_{s=0}^{\infty}a_sq^s,
\end{equation}
where 
\[a_s=\begin{cases}
(-1)^t(2t+1)&\text{ if $s=\frac{t(t+1)}{2}$;}\\
0&\text{ otherwise.}
\end{cases}
\]
The intended congruence will result from applying this identity to the tail end product of the preceding chain of expressions.
\end{proof}
\begin{theorem}
Let $n$ be a positive integer. We have
\begin{equation}
\tau(n+1)\equiv\sum_{n=\frac{3l^2\pm l}{2}+\frac{3m^2\pm m}{2}+11\frac{3s^2\pm s}{2}+11\frac{3r^2\pm r}{2}}(-1)^{l+m+s+r}\pmod{11}.
\end{equation}
\end{theorem}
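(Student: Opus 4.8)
The plan is to follow the template of the preceding theorems in this section: reduce $\prod_{m}(1-q^m)^{24}$ modulo $11$ to a product of pentagonal-type series and then read off the coefficient of $q^n$. Rather than tabulating every residue ${24\choose k}\bmod 11$ as was done for the smaller moduli, I would exploit the decomposition $24=2+2\cdot 11$. Writing $(1-q^m)^{24}=(1-q^m)^2\bigl((1-q^m)^{11}\bigr)^2$ and invoking the Frobenius congruence $(1-q^m)^{11}\equiv 1-q^{11m}\pmod{11}$, which holds because ${11\choose k}\equiv 0\pmod{11}$ for $0<k<11$, I obtain
\[
(1-q^m)^{24}\equiv (1-q^m)^2(1-q^{11m})^2\pmod{11}.
\]

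Next I would take the infinite product over $m$, giving
\[
\sum_{n=0}^{\infty}\tau(n+1)q^n=\prod_{m=1}^{\infty}(1-q^m)^{24}\equiv\prod_{m=1}^{\infty}(1-q^m)^2\prod_{m=1}^{\infty}(1-q^{11m})^2\pmod{11}.
\]
I would then apply Euler's pentagonal number theorem (\ref{PNT}) to each factor, using $\prod_{m=1}^{\infty}(1-q^m)=\sum_{l}(-1)^l q^{(3l^2\pm l)/2}$ together with the analogous expansion in which $q$ is replaced by $q^{11}$. This turns the right-hand side into a product of four pentagonal series, two in the variable $q$ and two in $q^{11}$.

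Finally I would extract the coefficient of $q^n$ from this fourfold product. A term $q^n$ arises exactly from a choice of exponents $\frac{3l^2\pm l}{2}$, $\frac{3m^2\pm m}{2}$, $11\frac{3s^2\pm s}{2}$, $11\frac{3r^2\pm r}{2}$ that sum to $n$, and each such choice carries the sign $(-1)^{l+m+s+r}$; summing over all of them produces the claimed congruence. The step demanding the most care is this last bookkeeping: matching the two copies of the pentagonal series in $q$ to the indices $l,m$ and the two copies in $q^{11}$ to the indices $s,r$, while handling the $\pm$ convention consistently (with the $l=0$ term supplying the constant $1$). Beyond this indexing, the argument is a routine convolution, so the only genuinely essential insights are the factorization $24=2+2\cdot 11$ and the Frobenius reduction modulo $11$.
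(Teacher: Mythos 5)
Your proposal is correct and follows essentially the same route as the paper: both reduce $\prod_{m}(1-q^m)^{24}$ modulo $11$ to $\prod_{m}(1-q^m)^2(1-q^{11m})^2$ and then read off the coefficient of $q^n$ via a fourfold convolution of pentagonal series. The only difference is cosmetic: you obtain the key factorization from $24=2+2\cdot 11$ and the Frobenius congruence $(1-q^m)^{11}\equiv 1-q^{11m}\pmod{11}$, whereas the paper tabulates the residues of ${24\choose k}$ modulo $11$ and recognizes the resulting polynomial as $(1-q^m)^2(1-q^{11m})^2$ --- your derivation is the cleaner of the two.
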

\begin{proof}
Since 
\[{24\choose{k}}\equiv\begin{cases}
1\pmod{11}&\text{ when $k$=0, 2, 22, 24;}\\
2\pmod{11}&\text{ when $k$=1, 11, 13, 23;}\\
4\pmod{11}&\text{ when $k$=12;}\\
0\pmod{11}&\text{ when $k$=3, 4, 5, 6, 7, 8, 9, 10, 14, 15, 16, 17, 18, 19, 20, 21,}
\end{cases}
\]
we can write
\begin{align*}
\sum_{n=0}^{\infty}\tau(n+1)q^n
&\equiv\prod_{m=1}^{\infty}\left[\left(1-2q^m+q^{2m}\right)-2\left(q^{11m}-2q^{12m}+q^{13m}\right)\right.\\
&\text{\ \ \ \ \ \ \ \ }+\left.\left(q^{22m}-2q^{23m}+q^{24m}\right)\right]\pmod{11}\\
&\equiv \prod_{m=1}^{\infty}\left[(1-q^m)^2(1-q^{11m})^2\right]\pmod{11}.
\end{align*}
Applying Euler's pentagonal number theorem now results in the following equality:
\[\prod_{m=1}^{\infty}\left[(1-q^m)^2(1-q^{11m})^2\right]=1+\sum_{n=\frac{3l^2\pm l}{2}+\frac{3m^2\pm m}{2}+11\frac{3s^2\pm s}{2}+11\frac{3r^2\pm r}{2}}(-1)^{l+m+s+r}q^n.
\]
This insight will yield the expected congruence when applied to the tail end product of the aforementioned chain of expressions. 
\end{proof}
\begin{theorem}
Let $n$ be a positive integer. We have
\begin{equation}
\tau(n+1)\equiv \sum_{n=13\times\frac{3r^2\pm r}{2}+s}(-1)^r\tau_{11}(s)\pmod{13}.
\end{equation}
\end{theorem}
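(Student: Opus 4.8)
The plan is to imitate the reductions used for the moduli $7$ and $11$ above: reduce the infinite product $\prod_{m=1}^{\infty}(1-q^m)^{24}$ modulo $13$, recognise the reduced product as a known generating function, and then read off coefficients of $q^{n+1}$. The arithmetic input is the behaviour of $\binom{24}{k}$ modulo $13$; by Lucas' theorem (writing $24=1\cdot 13+11$) one finds $\binom{24}{k}\equiv\binom{11}{k}\pmod{13}$ for $0\le k\le 12$ and $\binom{24}{k}\equiv\binom{11}{k-13}\pmod{13}$ for $13\le k\le 24$, with $\binom{24}{12}\equiv 0$. Equivalently, and more cleanly, since $13$ is prime the Frobenius (``freshman's dream'') congruence gives $(1-q^m)^{13}\equiv 1-q^{13m}\pmod{13}$, so that
\[
(1-q^m)^{24}=(1-q^m)^{13}(1-q^m)^{11}\equiv(1-q^{13m})(1-q^m)^{11}\pmod{13}.
\]

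Taking the product over all $m$ and multiplying by $q$, I would obtain
\[
\sum_{n=1}^{\infty}\tau(n)q^n=q\prod_{m=1}^{\infty}(1-q^m)^{24}\equiv\left(\prod_{m=1}^{\infty}(1-q^{13m})\right)\left(q\prod_{m=1}^{\infty}(1-q^m)^{11}\right)\pmod{13}.
\]
Two facts then finish the identification of the right-hand side. First, by Definition \ref{main-definition} the second factor is exactly $q\prod_{m=1}^{\infty}(1-q^m)^{11}=\sum_{s=1}^{\infty}\tau_{11}(s)q^s$. Second, substituting $q^{13}$ for $q$ in Euler's pentagonal number theorem (\ref{PNT}) gives $\prod_{m=1}^{\infty}(1-q^{13m})=\sum_{r}(-1)^r q^{13\frac{3r^2\pm r}{2}}$. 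Hence the congruence becomes
\[
\sum_{n=1}^{\infty}\tau(n)q^n\equiv\left(\sum_{r}(-1)^r q^{13\frac{3r^2\pm r}{2}}\right)\left(\sum_{s=1}^{\infty}\tau_{11}(s)q^s\right)\pmod{13},
\]
and comparing the coefficient of $q^{n+1}$ on both sides yields the asserted formula.

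The computation is routine once the splitting $24=13+11$ is in hand, so there is no serious technical obstacle; the point worth flagging is conceptual. In the neighbouring theorems the leftover exponent reduced to something whose generating function is a regular partition function (for instance $R_9$ or $R_{25}$) or a power of $\prod(1-q^m)$ small enough to be handled by Jacobi's triple product. Here the residue of $24$ modulo $13$ is $11$, which carries no such special structure, so the factor $q\prod_{m=1}^{\infty}(1-q^m)^{11}$ cannot be simplified any further; instead it is absorbed, by definition, into the generalized tau function $\tau_{11}$. Recognising that this leftover is precisely $\tau_{11}$ — rather than trying to force it into a regular-partition shape — is the only real idea, and it is exactly what makes the statement relate $\tau$ to $\tau_{11}$. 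I would also take care with the indexing in the final coefficient comparison, since the shift produced by the leading factor $q$ in the definition of $\tau_k$ must be tracked consistently across both sides.
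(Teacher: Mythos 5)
Your proof is correct and follows essentially the same route as the paper: the paper obtains the factorization $\prod_{m}(1-q^m)^{11}(1-q^{13m})$ by computing $\binom{24}{k}$ modulo $13$ explicitly and regrouping, which is exactly the expansion your Frobenius congruence $(1-q^m)^{13}\equiv 1-q^{13m}\pmod{13}$ delivers in one line, and both arguments then finish identically by invoking the definition of $\tau_{11}$ and Euler's pentagonal number theorem. Your caution about the indexing is warranted: the coefficient comparison actually produces the condition $n+1=13\cdot\frac{3r^2\pm r}{2}+s$, so the index as printed in the theorem (with $n$ in place of $n+1$) is off by one, as one checks at $n=1$, where $\tau(2)=-24\equiv 2\pmod{13}$ matches $\tau_{11}(2)=-11\equiv 2\pmod{13}$ rather than $\tau_{11}(1)=1$.
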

\begin{proof}
We observe that 
\[{24\choose{k}}\equiv\begin{cases}
{11\choose{k}}\pmod{13}&\text{ when $0\leq k\leq 11$;}\\
0\pmod{13}&\text{ when $k=12$;}\\ 
{11\choose{24-k}}\pmod{13}&\text{ when $13\leq k\leq 24$.}
\end{cases}
\]
This observation enables us to write
\begin{align*}
\sum_{n=0}^{\infty}\tau(n+1)q^n
&\equiv\prod_{m=1}^{\infty}\left[\left({11\choose{0}}-{11\choose{1}}q^m+\cdots -{11\choose{11}}q^{11m}\right)\right.\\
&\text{\ \ \ \ \ \ \ \ }-q^{13m}\left.\left({11\choose{0}}-{11\choose{1}}q^m+\cdots -{11\choose{11}}q^{11m}\right)\right]\pmod{13}\\
&\equiv \prod_{m=1}^{\infty}\left[(1-q^m)^{11}(1-q^{13m})\right]\pmod{13}.
\end{align*}
Now given the definition of $\tau_{11}(n)$ and Euler's pentagonal number theorem, we may write 
\[\prod_{m=1}^{\infty}(1-q^m)^{11}(1-q^{13m})=1+\left(\sum_{n=13\times\frac{3r^2\pm r}{2}+s}(-1)^r\tau_{11}(s)\right)q^n.
\]
This observation will yield the expected congruence when applied to the tail end product of the aforementioned chain of expressions.
\end{proof}
\begin{theorem}
Let $n$ be a positive integer. We have
\begin{equation}
\tau(n+1)\equiv\sum_{n=17\times\frac{3r^2\pm r}{2}+s}(-1)^r\tau_7(s)\pmod{17}.
\end{equation}
\end{theorem}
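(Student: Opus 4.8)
The plan is to follow the template established in the theorems for moduli $5$, $7$, $11$, and $13$: reduce the coefficients of $\prod_{m=1}^{\infty}(1-q^m)^{24}$ modulo $17$, factor the resulting power series, and then expand it using the definition of $\tau_7$ together with Euler's pentagonal number theorem.

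First I would record the behaviour of $\binom{24}{k}$ modulo $17$. Writing $24=1\cdot 17+7$ in base $17$, Lucas' theorem gives
\[
\binom{24}{k}\equiv
\begin{cases}
\binom{7}{k}\pmod{17}&\text{when }0\le k\le 7;\\
0\pmod{17}&\text{when }8\le k\le 16;\\
\binom{7}{24-k}\pmod{17}&\text{when }17\le k\le 24.
\end{cases}
\]
Equivalently and more directly, one may use the congruence $(1-q^m)^{17}\equiv 1-q^{17m}\pmod{17}$ to write $(1-q^m)^{24}=(1-q^m)^{17}(1-q^m)^{7}\equiv(1-q^{17m})(1-q^m)^{7}\pmod{17}$.

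Taking the product over all $m$ and using $\prod_{m=1}^{\infty}(1-q^m)^{24}=\sum_{n=0}^{\infty}\tau(n+1)q^n$, I would obtain
\[
\sum_{n=0}^{\infty}\tau(n+1)q^n\equiv\prod_{m=1}^{\infty}(1-q^m)^{7}(1-q^{17m})\pmod{17}.
\]
Invoking the definition of $\tau_7$ and applying Euler's pentagonal number theorem to the factor $\prod_{m=1}^{\infty}(1-q^{17m})$, I would then establish
\[
\prod_{m=1}^{\infty}(1-q^m)^{7}(1-q^{17m})=1+\left(\sum_{n=17\frac{3r^2\pm r}{2}+s}(-1)^{r}\tau_7(s)\right)q^n,
\]
and comparing the coefficient of $q^n$ on the two sides would yield the claimed congruence, exactly as in the modulo $13$ case.

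The computation is routine once the first step is in place, so the only point requiring genuine care is the reduction of $\binom{24}{k}$ modulo $17$ (equivalently, checking that the degree $17$ through $24$ terms of $(1-q^m)^{24}$ agree modulo $17$ with those of $-q^{17m}(1-q^m)^{7}$; this rests on the symmetry $\binom{7}{24-k}=\binom{7}{k-17}$ and the sign identity $(-1)^{k}=-(-1)^{k-17}$). After that, the remaining work is merely the bookkeeping of exponents in the Cauchy product of the two series.
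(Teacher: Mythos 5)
Your proposal is correct and follows essentially the same route as the paper: reduce $\binom{24}{k}$ modulo $17$, obtain the factorization $\prod_{m}(1-q^m)^{7}(1-q^{17m})$, and finish with the definition of $\tau_7$ and Euler's pentagonal number theorem. The only (harmless) difference is that you justify the key congruence via Lucas' theorem or the identity $(1-q^m)^{17}\equiv 1-q^{17m}\pmod{17}$, whereas the paper verifies the binomial coefficient residues directly; your shortcut is in fact cleaner.
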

\begin{proof}
Since
\[{24\choose{k}}\equiv\begin{cases}
{7\choose{k}}\pmod{17}&\text{ when $0\leq k\leq 7$;}\\ 
{7\choose{24-s}}\pmod{17}&\text{ when $0\leq 24-s\leq 7$;}\\ 
0\pmod{17}&\text{ otherwise,}
\end{cases}
\]
one can write
\begin{align*}
\sum_{n=0}^{\infty}\tau(n+1)q^n
&\equiv\prod_{m=1}^{\infty}\left[\left({7\choose{0}}-{7\choose{1}}q^m+\cdots -{7\choose{7}}q^{7m}\right)\right.\\
&\text{\ \ \ \ \ \ \ \ }-q^{17m}\left.\left({7\choose{0}}-{7\choose{1}}q^m+\cdots -{7\choose{7}}q^{7m}\right)\right]\pmod{17}\\
&\equiv \prod_{m=1}^{\infty}(1-q^m)^{7}(1-q^{17m})\pmod{17}.
\end{align*}
One can have
\[\prod_{m=1}^{\infty}(1-q^m)^{7}(1-q^{17m})=1+\left(\sum_{n=17\times\frac{3r^2\pm r}{2}+s}(-1)^r\tau_7(s)\right)q^n
\]
based on the definition of $\tau_7(n)$ and Euler's pentagonal number theorem. This observation will yield the expected congruence when applied to the tail end product of the aforementioned chain of expressions.
\end{proof}
\begin{theorem}
Let $n$ be a positive integer. We have
\begin{equation}
\tau(n+1)\equiv\sum_{n=19\times\frac{3r^2\pm r}{2}+s}(-1)^r\tau_5(s)\pmod{19}.
\end{equation}
\end{theorem}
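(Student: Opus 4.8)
The plan is to follow the template already established above for the moduli $13$ and $17$, now exploiting the decomposition $24 = 19 + 5$. The first step is to reduce the binomial coefficients $\binom{24}{k}$ modulo $19$. Writing $24 = 1\cdot 19 + 5$ and applying Lucas' theorem gives
\[
\binom{24}{k}\equiv
\begin{cases}
\binom{5}{k}\pmod{19}&\text{ when }0\le k\le 5;\\
0\pmod{19}&\text{ when }6\le k\le 18;\\
\binom{5}{24-k}\pmod{19}&\text{ when }19\le k\le 24.
\end{cases}
\]
Feeding these values into the expansion $\prod_{m=1}^{\infty}(1-q^m)^{24}=\prod_{m=1}^{\infty}\left(\sum_{k=0}^{24}(-1)^k\binom{24}{k}q^{km}\right)$, the surviving low block $0\le k\le 5$ assembles into the factor $(1-q^m)^5$, and the high block $19\le k\le 24$ assembles, after a short sign check, into $-q^{19m}(1-q^m)^5$, so that
\[
\sum_{n=0}^{\infty}\tau(n+1)q^n\equiv\prod_{m=1}^{\infty}(1-q^m)^5(1-q^{19m})\pmod{19}.
\]
Equivalently this is the identity $(1-q^m)^{19}\equiv 1-q^{19m}\pmod{19}$ applied to the factorization $(1-q^m)^{24}=(1-q^m)^{19}(1-q^m)^5$.

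The second step expands the right-hand product by means of the two generating functions already available. By Definition \ref{main-definition} we have $q\prod_{m=1}^{\infty}(1-q^m)^5=\sum_{s\ge 1}\tau_5(s)q^s$, while Euler's pentagonal number theorem, applied with $q$ replaced by $q^{19}$, gives $\prod_{m=1}^{\infty}(1-q^{19m})=\sum_{r}(-1)^r q^{19\cdot\frac{3r^2\pm r}{2}}$. Exactly as in the cases $13$ and $17$, multiplying these two series produces
\[
\prod_{m=1}^{\infty}(1-q^m)^5(1-q^{19m})=1+\left(\sum_{n=19\cdot\frac{3r^2\pm r}{2}+s}(-1)^r\tau_5(s)\right)q^n,
\]
and comparing the coefficient of $q^n$ in this display with the congruence of the previous display yields the stated identity for $\tau(n+1)$ modulo $19$.

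There is no genuine obstacle here: the entire content is the single arithmetic observation $24=19+5$ together with Fermat's little theorem (in the guise of the Lucas reduction of $\binom{24}{k}$), after which everything is a formal manipulation of power series identical in structure to the preceding theorems. The only point that requires care is the bookkeeping in the final coefficient extraction—keeping track of the shift introduced by the leading factor $q$ in the definitions of both $\tau$ and $\tau_5$—so that the index condition on the resulting sum emerges precisely as $n=19\cdot\frac{3r^2\pm r}{2}+s$.
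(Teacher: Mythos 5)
Your proposal is correct and follows essentially the same route as the paper: reduce $\binom{24}{k}$ modulo $19$ (the paper lists the values $1,5,10,10,5,1$ explicitly, which are exactly the $\binom{5}{k}$ you obtain via Lucas), collapse the product to $\prod_{m\ge 1}(1-q^m)^5(1-q^{19m})$, and expand using the definition of $\tau_5$ together with Euler's pentagonal number theorem. The only cosmetic difference is your appeal to Lucas' theorem and the identity $(1-q^m)^{19}\equiv 1-q^{19m}\pmod{19}$ in place of the paper's direct tabulation of residues.
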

\begin{proof}
Since
\[{24\choose{k}}\equiv\begin{cases}
1\pmod{19}&\text{ when $k=0, 5, 19, 24$;}\\
5\pmod{19}&\text{ when $k=1 ,4, 20, 23$;}\\
10\pmod{19}&\text{ when $k=2, 3, 21, 22$;}\\ 
0\pmod{19}&\text{ otherwise,}
\end{cases}
\]
one can write
\begin{align*}
\sum_{n=0}^{\infty}\tau(n+1)q^n
&\equiv\prod_{m=1}^{\infty}\left[\left(1-5q^m+10q^{2m}-10q^{3m}+5q^{4m}-q^{5m}\right)\right.\\
&\text{\ \ \ \ \ \ \ \ }-q^{19m}\left.\left(1-5q^m+10q^{2m}-10q^{3m}+5q^{4m}-q^{5m}\right)\right]\pmod{19}\\
&\equiv \prod_{m=1}^{\infty}\left[(1-q^m)^{5}(1-q^{19m})\right]\pmod{19}.
\end{align*}
Now from the definition of $\tau_5(n)$ and Euler's pentagonal number theorem, we have
\[\prod_{m=1}^{\infty}(1-q^m)^{5}(1-q^{19m})=1+\left(\sum_{n=19\times\frac{3r^2\pm r}{2}+s}(-1)^r\tau_5(s)\right)q^n.
\]
While applying this observation in the tail end product of the above chain of expressions, we will get the expected congruence.
\end{proof}
Utilizing the following Ramanujan's formula \cite[pp. 163-164]{hardy} for $\tau(p^r)$:
\[\tau(p^r)=\frac{p^{\frac{11}{2}r}}{\sin{\psi_p}}\sin{(r+1)\psi_p},
\] 
where $p$ is a prime number and $\cos{\psi_p}=\frac{\tau(p)}{2p^{\frac{11}{2}}}$, Lehmer \cite{Lehmer} gave an expression for $\tau(n)\mod{23}$:
\begin{equation}\label{lehmer}
\tau(n)\equiv\sigma_{11}(n_1)2^t3^{\frac{-t}{2}}\prod\limits_{i=1}^{t}\sin{\frac{2\pi}{3}(1+\alpha_i)}\pmod{23},
\end{equation}
where $n=n_1\prod\limits_{i=1}^{t}p_i^{\alpha_i}$, $p_i$s are the only prime factors of $n$ which are not of the form $u^2+23v^2$ but are quadratic residues of 23, and $\alpha_i$ is the exponent of the highest power of $p_i$ dividing $n$. 

We provide an expression for $\tau(n)$ modulo $23$ in the following result, which is quite simple in comparison to (\ref{lehmer}).
\begin{theorem}
Let $n$ be a positive integer. We have
\begin{equation}
\tau(n+1)\equiv\sum_{n=\frac{3r^2\pm r}{2}+23\times\frac{3s^2\pm s}{2}}(-1)^{r+s}\pmod{23}.
\end{equation}
\end{theorem}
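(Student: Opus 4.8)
The plan is to follow the template used throughout this section: reduce the product $\prod_{m=1}^{\infty}(1-q^m)^{24}$ modulo $23$ into a product of Euler factors, and then unfold it via the pentagonal number theorem. The crucial simplification is forced by the fact that $24=23+1$ together with the primality of $23$, which is exactly what places $23$ among the moduli amenable to this method.

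First I would reduce the binomial coefficients $\binom{24}{k}$ modulo $23$. By Lucas's theorem, writing $24$ in base $23$, one finds that $\binom{24}{k}\equiv 1\pmod{23}$ precisely when $k\in\{0,1,23,24\}$ and $\binom{24}{k}\equiv 0\pmod{23}$ otherwise. Equivalently, and more transparently, one may invoke the Frobenius identity $(1-q^m)^{23}\equiv 1-q^{23m}\pmod{23}$ (raising to the $p$-th power is a ring endomorphism modulo the prime $p$), whence
\[
(1-q^m)^{24}=(1-q^m)^{23}(1-q^m)\equiv(1-q^{23m})(1-q^m)\pmod{23}.
\]
Either route yields the same factorization, which is the single substantive step of the argument.

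With this in hand, I would take the product over all $m\geq 1$ and use the identity $\sum_{n=0}^{\infty}\tau(n+1)q^n=\prod_{m=1}^{\infty}(1-q^m)^{24}$ to write
\[
\sum_{n=0}^{\infty}\tau(n+1)q^n\equiv\prod_{m=1}^{\infty}(1-q^m)\prod_{m=1}^{\infty}(1-q^{23m})\pmod{23}.
\]
Applying Euler's pentagonal number theorem to the first product gives $\sum_{r}(-1)^r q^{\frac{3r^2\pm r}{2}}$, and applying it to the second (with $q$ replaced by $q^{23}$) gives $\sum_{s}(-1)^s q^{23\cdot\frac{3s^2\pm s}{2}}$. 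Multiplying these two series produces
\[
\prod_{m=1}^{\infty}(1-q^m)(1-q^{23m})=\sum_{r,s}(-1)^{r+s}q^{\frac{3r^2\pm r}{2}+23\cdot\frac{3s^2\pm s}{2}},
\]
and equating the coefficients of $q^n$ at the two ends delivers the claimed congruence.

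There is no genuine obstacle here; the only step deserving care is the factorization $(1-q^m)^{24}\equiv(1-q^m)(1-q^{23m})\pmod{23}$, after which everything is a routine application of the pentagonal number theorem, exactly as in the earlier cases of this section. I would also remark that this explicit formula refines the divisibility criterion noted after Proposition \ref{modulok-1}: since every term of the sum carries a factor $23\cdot\frac{3s^2\pm s}{2}\equiv 0\pmod{23}$, a necessary condition for $\tau(n+1)\not\equiv 0\pmod{23}$ is that $n\equiv\frac{3r^2\pm r}{2}\pmod{23}$ for some nonnegative integer $r$, recovering that criterion in fully explicit form and giving a far simpler description than Lehmer's expression \eqref{lehmer}.
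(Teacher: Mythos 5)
Your proposal is correct and follows essentially the same route as the paper: both reduce $\binom{24}{k}$ modulo $23$ to obtain the factorization $\prod_{m\ge 1}(1-q^m)(1-q^{23m})$ and then expand each factor by Euler's pentagonal number theorem. Your alternative justification via the Frobenius identity $(1-q^m)^{23}\equiv 1-q^{23m}\pmod{23}$ is a slicker way to see the one substantive step, but it does not change the argument.
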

\begin{proof}
Since 
\[{24\choose{k}}\equiv\begin{cases}
1\pmod{23}&\text{ if $k=0, 1, 23, 24$;}\\
0\pmod{23}&\text{ otherwise,}
\end{cases}
\]
we can write
\begin{align*}
\sum_{n=0}^{\infty}\tau(n+1)q^n
&\equiv\prod_{m=1}^{\infty}\left(1-q^m-q^{23m}+q^{24m}\right)\pmod{23}\\
&\equiv\prod_{m=1}^{\infty}\left[(1-q^m)(1-q^{23m})\right]\pmod{23}.
\end{align*}
Write 
\[\prod_{m=1}^{\infty}\left[(1-q^m)(1-q^{23m})\right]=\sum_{n=0}^{\infty}a_nq^n.
\]
Now in accordance with Euler's pentagonal number theorem, we have
\[a_n=\begin{cases}
(-1)^{r+s}&\text{ if $n=\frac{3r^2\pm r}{2}+23\times\frac{3s^2\pm s}{2}$;}\\
0&\text{ otherwise.}
\end{cases}
\]
While applying this observation in the tail end product of the above chain of expressions, we will get the intended congruence.
\end{proof}
\begin{theorem}
Let $n$ be a positive integer. We have
\begin{equation}
\tau(n+1)\equiv \sum_{n=r+5\frac{s(s+1)}{2}+5\frac{3t^2\pm t}{2}}(-1)^{s+t}(2s+1)R_{5}(r)\pmod{25}.
\end{equation}
\end{theorem}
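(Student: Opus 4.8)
The plan is to reduce the generating function $\sum_{n\geq 0}\tau(n+1)q^n=\prod_{m=1}^{\infty}(1-q^m)^{24}$ modulo $25$ to a product of three familiar infinite series and then read off the coefficient of $q^n$. Since the modulus $25=5^2$ is a prime power rather than a prime, the elementary congruence $(1-q)^p\equiv 1-q^p$ used throughout this section is not by itself sufficient. The key new ingredient I would isolate first is the prime-power congruence
\[
(1-q^m)^{25}\equiv (1-q^{5m})^5\pmod{25}.
\]
To establish it, write $(1-x)^5=(1-x^5)+5h(x)$ with $h(x)=-x+2x^2-2x^3+x^4\in\mathbb{Z}[x]$ (merely the binomial expansion with the middle terms collected), set $x=q^m$, and raise to the fifth power. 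Expanding $\bigl[(1-q^{5m})+5h(q^m)\bigr]^5$ by the binomial theorem, every term beyond the leading one carries a factor $5^2=25$, so the whole expression collapses to $(1-q^{5m})^5$ modulo $25$. This single congruence repackages exactly the residues $\binom{24}{k}\bmod 25$ that a term-by-term tabulation (as in the neighbouring theorems) would otherwise require.

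With this in hand I would write $(1-q^m)^{24}=(1-q^m)^{-1}(1-q^m)^{25}\equiv (1-q^m)^{-1}(1-q^{5m})^5\pmod{25}$, and hence, taking the product over $m$,
\[
\sum_{n=0}^{\infty}\tau(n+1)q^n\equiv\prod_{m=1}^{\infty}\frac{(1-q^{5m})^5}{1-q^m}\pmod{25}.
\]
The next step is to split the five copies of $1-q^{5m}$ so that the product factors as the three generating functions appearing on the right-hand side of the statement: one copy of $1-q^{5m}$ is paired with $(1-q^m)^{-1}$ to form $\prod_{m}\frac{1-q^{5m}}{1-q^m}=\sum_{r}R_5(r)q^r$; three copies give $\prod_{m}(1-q^{5m})^3$; and the last copy gives $\prod_{m}(1-q^{5m})$.

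Finally I would evaluate the remaining two products through the identities already recorded in this section. Applying Jacobi's triple product identity (\ref{JTPI}) with $q$ replaced by $q^5$ yields $\prod_{m}(1-q^{5m})^3=\sum_{s}(-1)^s(2s+1)q^{5s(s+1)/2}$, and applying Euler's pentagonal number theorem (\ref{PNT}) with $q$ replaced by $q^5$ yields $\prod_{m}(1-q^{5m})=\sum_{t}(-1)^t q^{5(3t^2\pm t)/2}$. Multiplying the three series and extracting the coefficient of $q^n$ produces exactly $\sum(-1)^{s+t}(2s+1)R_5(r)$ summed over $n=r+5\frac{s(s+1)}{2}+5\frac{3t^2\pm t}{2}$, which is the claimed congruence. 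The only genuine obstacle is the prime-power congruence in the first step; once it is secured, the remainder is a routine regrouping together with the two classical product formulas.
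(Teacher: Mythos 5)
Your proof is correct, and while it lands on exactly the same intermediate factorization as the paper, it gets there by a genuinely different mechanism. The paper tabulates all twenty-five residues $\binom{24}{k}\bmod 25$, regroups the resulting polynomial into five blocks of the form $c\,q^{5jm}(1+q^m+\cdots+q^{4m})$, and recognizes the product $\prod_m\frac{1-q^{5m}}{1-q^m}\prod_r(1-q^{5r})^4$. You instead isolate the prime-power congruence $(1-x)^{25}\equiv(1-x^5)^5\pmod{25}$ (your expansion of $\bigl[(1-x^5)+5h(x)\bigr]^5$ is sound, since every term past the leading one carries $\binom{5}{j}5^j\equiv 0\pmod{25}$) and then divide by the unit $(1-q^m)$ in $\mathbb{Z}[[q]]$, which preserves coefficientwise congruences; one can check directly that $(1-q^{5m})^4(1+q^m+\cdots+q^{4m})$ reproduces the paper's table of $(-1)^k\binom{24}{k}\bmod 25$, so nothing is lost. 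What your route buys is brevity and generality: the same argument gives $(1-x)^{p^2}\equiv(1-x^p)^p\pmod{p^2}$ for any prime $p$, hence $\sum\tau_{p^2-1}(n+1)q^n\equiv\prod_m\frac{1-q^{pm}}{1-q^m}\prod_m(1-q^{pm})^{p-1}\pmod{p^2}$, of which the theorem is the case $p=5$; the paper's method is more computational but is the uniform technique used throughout the section. From the common factorization onward the two proofs coincide: one factor $1-q^{5m}$ pairs with $(1-q^m)^{-1}$ to give the $R_5$ generating function, three factors are handled by Jacobi's triple product identity in the variable $q^5$, and the last by Euler's pentagonal number theorem in $q^5$, yielding the stated coefficient sum.
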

\begin{proof}
Since 
\[{24\choose{k}}\equiv\begin{cases}
1&\text{ when $k$=0, 2, 4, 20, 22, 24;}\\
-1&\text{ when $k$=1, 3, 21, 23;}\\
4&\text{ when $k$=5, 7, 9, 15, 17, 19;}\\
-4&\text{ when $k$=6, 8, 16, 18;}\\
6&\text{ when $k$=10, 12, 14;}\\
-6&\text{ when $k$=11, 13,}
\end{cases}
\]
we can write
\begin{align*}
\sum_{n=0}^{\infty}\tau(n+1)q^n
&\equiv\prod_{m=1}^{\infty}\left[\left(1+q^m+q^{2m}+q^{3m}+q^{4m}\right)\right.\\ &\text{\ \ \ \ \ \ \ \ }-4\left(q^{5m}+q^{6m}+q^{7m}+q^{8m}+q^{9m}\right)\\
&\text{\ \ \ \ \ \ \ \ }+6\left(q^{10m}+q^{11m}+q^{12m}+q^{13m}+q^{14m}\right)\\
&\text{\ \ \ \ \ \ \ \ }-4\left(q^{15m}+q^{16m}+q^{17m}+q^{18m}+q^{19m}\right)\\
&\text{\ \ \ \ \ \ \ \ }+\left.\left(q^{20m}+q^{21m}+q^{22m}+q^{23m}+q^{24m}\right)\right]\pmod{25}
\end{align*}
\begin{align*}
&\equiv\prod_{m=1}^{\infty}\left[\frac{1-q^{5m}}{1-q^m}-4q^{5m}\frac{1-q^{5m}}{1-q^m}+6q^{10m}\frac{1-q^{5m}}{1-q^m}-4q^{15m}\frac{1-q^{5m}}{1-q^m}\right.\\
&\text{\ \ \ \ \ \ \ \ }\left.+q^{20m}\frac{1-q^{5m}}{1-q^m}\right]\pmod{25}\\
&\equiv\prod_{m=1}^{\infty}\frac{1-q^{5m}}{1-q^m}\prod_{r=1}^{\infty}(1-q^{5r})^4\pmod{25}.
\end{align*}
Given the generating function of $R_5(n)$, Euler's pentagonal number theorem and Jacobi's triple product identity, we may write
\[\prod_{m=1}^{\infty}\frac{1-q^{5m}}{1-q^m}\prod_{r=1}^{\infty}(1-q^{5r})^4=1+\sum_{\substack{n\in\mathbb N\\ n=r+5\frac{s(s+1)}{2}+5\frac{3t^2\pm t}{2}}}(-1)^{s+t}(2s+1)R_{5}(r)q^n.
\]
While applying this observation in the tail end product of the above chain of expressions, we will get the intended congruence.
\end{proof}
\subsection{ Prime Moduli}
Let $p$ be a prime number. This section provides an expression for $\tau_k(n)$ modulo $p$ when $k\in\{p^s:s\in\mathbb N\}\cup\{2p, 2p+1, p^2+1\}$.
\begin{theorem}\label{tau-prime-modulo}
Let $p$ be a prime number, and let $s$ be a positive integer. We have
\begin{equation}
\tau_{p^s}(n+1)\equiv\begin{cases}
(-1)^t\pmod{p}&\text{ if $n=\frac{p^s(3t^2\pm t)}{2}$};\\ 
0\pmod{p}&\text{ otherwise.}
\end{cases}
\end{equation}
\end{theorem}
\begin{proof}
Since 
\[{{p^s}\choose{t}}\equiv0\pmod{p}
\]
for every $t\in\{1,2,\ldots ,p^s-1\}$, we can write
\begin{align*}
\sum_{n=1}^{\infty}\tau_{p^s}(n)q^{n-1}&=\prod_{m=1}^{\infty}(1-q^m)^{p^s}\\
&\equiv\prod_{m=1}^{\infty}(1-q^{p^sm})\pmod{p}.
\end{align*}
Using Euler's pentagonal number theorem, we now obtain
\[\prod_{m=1}^{\infty}(1-q^{p^sm})=\sum_{r=0}^{\infty}\omega_{p^s}(r)q^r,
\]
where 
\[\omega_{p^s}(r)=\begin{cases}
(-1)^t&\text{ if $\frac{r}{p^s}=\frac{(3t^2\pm t)}{2}$};\\ 
0&\text{ otherwise}.
\end{cases}
\]
While applying this observation in the tail end product of the above chain of expressions, we will get the intended congruence.
\end{proof}
\begin{definition}
Let $m$ be a positive integer. Let $\vartheta_p(m)$ be defined as a positive integer $k$ such that $p^k\mid m$ but $p^{k+1}\nmid m$.
\end{definition}
\begin{theorem}Let $p$ be an odd prime number. We have
\begin{equation}
\tau_{2p}(n+1)\equiv\sum_{n+1=p\left(\frac{3r^2\pm r}{2}+\frac{3s^2\pm s}{2}\right)}(-1)^{r+s}\pmod{p}.
\end{equation}
\end{theorem}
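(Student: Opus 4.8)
The plan is to follow exactly the template used throughout this section: reduce the product $\prod_{m=1}^{\infty}(1-q^m)^{2p}$ modulo $p$ by first recording the residues of the binomial coefficients $\binom{2p}{t}$, and then extract coefficients with Euler's pentagonal number theorem, in parallel with Theorem \ref{tau-prime-modulo}. First I would determine $\binom{2p}{t}\pmod p$ for $0\le t\le 2p$. Writing $2p=(2,0)_p$ in base $p$ (valid since $2<p$ for every odd prime) and $t=t_1 p+t_0$ with $0\le t_0,t_1<p$, Lucas' theorem gives $\binom{2p}{t}\equiv\binom{2}{t_1}\binom{0}{t_0}\pmod p$. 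Since $\binom{0}{t_0}=0$ unless $t_0=0$, and $\binom{2}{t_1}=0$ for $t_1\notin\{0,1,2\}$, the only surviving residues are $\binom{2p}{0}\equiv1$, $\binom{2p}{p}\equiv2$, and $\binom{2p}{2p}\equiv1\pmod p$; every other $\binom{2p}{t}$ vanishes modulo $p$.

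Next I would feed these residues into the binomial expansion $(1-q^m)^{2p}=\sum_{t=0}^{2p}(-1)^t\binom{2p}{t}q^{tm}$. Only the three surviving terms contribute modulo $p$; recalling that $p$ is odd, so $(-1)^p=-1$, this yields
\[
(1-q^m)^{2p}\equiv 1-2q^{pm}+q^{2pm}=(1-q^{pm})^2\pmod p.
\]
Taking the product over $m$, I obtain the key congruence
\[
\prod_{m=1}^{\infty}(1-q^m)^{2p}\equiv\left(\prod_{m=1}^{\infty}(1-q^{pm})\right)^2\pmod p.
\]

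Finally I would apply Euler's pentagonal number theorem with $q$ replaced by $q^p$, giving $\prod_{m=1}^{\infty}(1-q^{pm})=\sum_r(-1)^r q^{p\frac{3r^2\pm r}{2}}$, and then square this series; the Cauchy product produces $\sum_{r,s}(-1)^{r+s}q^{p\left(\frac{3r^2\pm r}{2}+\frac{3s^2\pm s}{2}\right)}$. Since $\sum_{n=1}^{\infty}\tau_{2p}(n)q^n=q\prod_{m=1}^{\infty}(1-q^m)^{2p}$, the value $\tau_{2p}(n+1)$ is the coefficient of $q^n$ in the product above, and equating coefficients modulo $p$ yields the stated congruence, the summation running over all representations of the exponent as $p\left(\frac{3r^2\pm r}{2}+\frac{3s^2\pm s}{2}\right)$ weighted by the sign $(-1)^{r+s}$.

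The computation is routine once this template is in place, and there is no genuinely hard step. The only point requiring care is the evaluation $\binom{2p}{p}\equiv2\pmod p$, which is what drives the clean factorization into the square $(1-q^{pm})^2$ rather than some less structured expression, together with the bookkeeping of the Cauchy product that turns a single pentagonal sum into the double sum over $r$ and $s$.
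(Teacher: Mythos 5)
Your proof is correct and follows essentially the same route as the paper: determine the residues of $\binom{2p}{t}$ modulo $p$, conclude that $\prod_{m=1}^{\infty}(1-q^m)^{2p}\equiv\prod_{m=1}^{\infty}(1-q^{pm})^2\pmod p$, and expand the square via Euler's pentagonal number theorem. The only difference is in one sub-step: you obtain the binomial residues (in particular $\binom{2p}{p}\equiv 2\pmod p$) in one stroke from Lucas' theorem, whereas the paper derives them by hand through a $p$-adic valuation count together with Wilson's theorem; both are valid and yield the same factorization.
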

\begin{proof}
Clearly ${2p\choose{0}}\equiv 1\pmod{p}$ and ${2p\choose{2p}}\equiv1\pmod{p}$. We observe that $\vartheta_p(2p\times (2p-1)\times \cdots \times (2p-(k-1))=1$ and $\vartheta_p(1\times 2\times \cdots \times k)=0$ when $1\leq k\leq p-1$. Consequently, ${{2p}\choose{k}}\equiv 0\pmod{p}$ when $1\leq k \leq p-1$. Also, we have
\[{{2p}\choose{p}}=\frac{2p\times (2p-1)\times\cdots 
\times(p+1)}{1\times 2\times \cdots \times p}=\frac{2\times (2p-1)\times\cdots 
\times(p+1)}{(p-1)!}.
\]
Since
$2p-1\equiv-1\pmod{p}$, $2p-2\equiv -2\pmod{p}$, $\cdots$, $p+1\equiv -(p-1)\pmod{p}$, we obtain (in light of Wilson's theorem) that 
\begin{align*}
2\times(2p-1)\times\cdots 
\times(p+1)&\equiv2(p-1)!\pmod{p}\\
&\equiv -2\pmod{p}.
\end{align*} 
Consequently, ${{2p}\choose{p}}$ is of the form $\frac{rp-2}{kp-1}$. Given this form, we obtain ${{2p}\choose{p}}-2=\frac{rp-2}{kp-1}-2=\frac{(r-2k)p}{kp-1}$.
Therefrom, it follows that ${{2p}\choose{p}}\equiv 2\pmod{p}$. Moreover, since ${{2p}\choose{k}}={{2p}\choose{2p-k}}$, we can write
\begin{align*}
\sum_{n=0}^{\infty}\tau_{2p}(n+1)q^n
&\equiv\prod_{m=1}^{\infty}\left(1+(-1)^{p}2q^{pm}+(-1)^{2p}q^{2pm}\right)\pmod{p}\\ 
&\equiv\prod_{m=1}^{\infty}(1-q^{pm})^2\pmod{p}.
\end{align*}
Since 
\[\prod_{m=1}^{\infty}(1-q^{pm})^2=1+\left(\sum_{n+1=p\left(\frac{3r^2\pm r}{2}+\frac{3s^2\pm s}{2}\right)}(-1)^{r+s}\right)q^n,
\]
the result follows as a consequence of the above observation.
\end{proof}
\begin{corollary}
Let $p$ be an odd prime number. If $p\nmid n+1$ then
\[\tau_{2p}(n+1)\equiv 0\pmod{p}.
\]
\end{corollary}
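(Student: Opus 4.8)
The plan is to read this corollary off directly from the preceding theorem, which expresses $\tau_{2p}(n+1)$ modulo $p$ as
\[
\tau_{2p}(n+1)\equiv\sum_{n+1=p\left(\frac{3r^2\pm r}{2}+\frac{3s^2\pm s}{2}\right)}(-1)^{r+s}\pmod{p}.
\]
The summation is indexed by those pairs of non-negative integers $r,s$ (with independent choices of sign) for which the displayed equation holds. The first observation I would make is that each generalized pentagonal number $\frac{3r^2\pm r}{2}$ is an integer, so the quantity $\frac{3r^2\pm r}{2}+\frac{3s^2\pm s}{2}$ is an integer for every admissible pair $(r,s)$.

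Consequently, the right-hand side of the indexing equation $n+1=p\left(\frac{3r^2\pm r}{2}+\frac{3s^2\pm s}{2}\right)$ is always a multiple of $p$, so that equation can be satisfied only when $p\mid n+1$. Hence, under the hypothesis $p\nmid n+1$, no admissible pair $(r,s)$ exists, and the index set of the sum is empty.

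Therefore the plan is simply to note that when $p\nmid n+1$ the right-hand side of the theorem's congruence is an empty sum, which equals $0$, whence $\tau_{2p}(n+1)\equiv0\pmod{p}$. There is no genuine obstacle here: the only point worth checking is the integrality of the generalized pentagonal numbers, which is what guarantees that the factor $p$ appearing on the right-hand side of the indexing equation forces $p\mid n+1$.
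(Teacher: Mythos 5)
Your argument is correct and is exactly the (implicit) justification the paper intends: the corollary is stated without proof precisely because the index set of the sum in the preceding theorem forces $p\mid n+1$, so the sum is empty when $p\nmid n+1$. Nothing further is needed.
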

\begin{theorem}
Let $p$ be an odd prime number. Then we have
\begin{equation}
\tau_{2p+1}(n+1)\equiv\sum_{n+1=\frac{3r^2\pm r}{2}+p\left(\frac{3s^2\pm s}{2}+\frac{3t^2\pm t}{2}\right)}(-1)^{r+s+t}\pmod{p}.
\end{equation}
\end{theorem}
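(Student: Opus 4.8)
The plan is to mirror the strategy used for $\tau_{2p}$ and the other prime-modulus theorems in this section: reduce the defining product $\prod_{m=1}^{\infty}(1-q^m)^{2p+1}$ modulo $p$ to a product of pentagonal factors, and then read off the coefficient of $q^n$ by Euler's pentagonal number theorem. By Definition \ref{main-definition} we have
\[
\sum_{n=0}^{\infty}\tau_{2p+1}(n+1)q^n=\prod_{m=1}^{\infty}(1-q^m)^{2p+1},
\]
so the whole problem reduces to understanding the right-hand side modulo $p$.

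The first step is to establish the factorization $(1-q^m)^{2p+1}\equiv(1-q^m)(1-q^{pm})^2\pmod p$. The cleanest route is the Frobenius identity in $\mathbb{F}_p[[q]]$: since $\binom{p}{j}\equiv0\pmod p$ for $0<j<p$, we have $(1-q^m)^p\equiv1-q^{pm}\pmod p$ (using $(-1)^p=-1$ as $p$ is odd). Grouping the exponent as $2p+1=p+p+1$ then gives
\[
(1-q^m)^{2p+1}=\big((1-q^m)^p\big)^2(1-q^m)\equiv(1-q^{pm})^2(1-q^m)\pmod p.
\]
In keeping with the explicit computations of the earlier theorems, one could instead compute $\binom{2p+1}{k}\bmod p$ directly: by Lucas' theorem (with $2p+1=(2,1)_p$ in base $p$) the only nonzero residues are $\binom{2p+1}{0}\equiv\binom{2p+1}{2p+1}\equiv1$, $\binom{2p+1}{1}\equiv\binom{2p+1}{2p}\equiv1$, and $\binom{2p+1}{p}\equiv\binom{2p+1}{p+1}\equiv2$, where the last also follows from $\binom{2p+1}{p}=\binom{2p}{p}+\binom{2p}{p-1}$ together with $\binom{2p}{p}\equiv2$ established in the preceding theorem. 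Substituting these into $(1-q^m)^{2p+1}=\sum_{k}\binom{2p+1}{k}(-1)^kq^{km}$ and collecting terms reproduces the same factorization.

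Taking the product over $m$ and separating the factors gives
\[
\sum_{n=0}^{\infty}\tau_{2p+1}(n+1)q^n\equiv\prod_{m=1}^{\infty}(1-q^m)\cdot\prod_{m=1}^{\infty}(1-q^{pm})^2\pmod p.
\]
Euler's pentagonal number theorem (\ref{PNT}) applied to the first factor yields $\prod_{m}(1-q^m)=\sum_{r}(-1)^{r}q^{\frac{3r^2\pm r}{2}}$, and applied to each of the two copies in the second factor yields
\[
\prod_{m=1}^{\infty}(1-q^{pm})^2=\sum_{s,t}(-1)^{s+t}q^{p\left(\frac{3s^2\pm s}{2}+\frac{3t^2\pm t}{2}\right)}.
\]
Multiplying these three pentagonal series and reading off the coefficient of $q^n$ then gives exactly $\sum(-1)^{r+s+t}$ over all representations $\frac{3r^2\pm r}{2}+p\left(\frac{3s^2\pm s}{2}+\frac{3t^2\pm t}{2}\right)$ of the relevant argument, which is the asserted congruence for $\tau_{2p+1}(n+1)$.

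The main obstacle, should one follow the explicit-binomial route of the surrounding proofs, is the single nonobvious residue $\binom{2p+1}{p}\equiv\binom{2p+1}{p+1}\equiv2\pmod p$ — this is precisely where Wilson's theorem entered in the $\tau_{2p}$ proof. With the Frobenius identity, however, even this is bypassed and the remaining work is entirely routine: one needs only the parity fact $(-1)^{p}=-1$ and careful sign bookkeeping in the triple convolution of the three pentagonal series.
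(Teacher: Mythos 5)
Your proposal is correct and lands on exactly the factorization the paper uses, namely $\prod_{m\ge1}(1-q^m)^{2p+1}\equiv\prod_{m\ge1}(1-q^m)(1-q^{pm})^2\pmod p$, followed by the same application of Euler's pentagonal number theorem to each factor; the only difference is how that factorization is obtained. The paper computes the residues $\binom{2p+1}{k}\bmod p$ one at a time: a $p$-adic valuation argument on the product $\prod_{k}\frac{2p+1-(k-1)}{k}$ for $2\le k\le p-1$, Wilson's theorem to get $\binom{2p+1}{p}\equiv2$, and the symmetry $\binom{2p+1}{k}=\binom{2p+1}{2p+1-k}$ for the rest. You replace all of this with the Frobenius identity $(1-q^m)^p\equiv1-q^{pm}\pmod p$ applied to the grouping $2p+1=p+p+1$, which is shorter, less error-prone, and makes transparent why the answer must have the shape $(1-q^m)(1-q^{pm})^2$; your fallback explicit-binomial route via Lucas (or Pascal's rule together with $\binom{2p}{p}\equiv2$ from the preceding theorem) recovers the paper's tabulation without invoking Wilson. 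What the paper's style buys is uniformity with the neighbouring theorems, where the exponent (e.g.\ $24$) is not a sum of prime powers in any useful way and the residues genuinely must be computed by hand. One small remark: the coefficient of $q^n$ you read off is a sum over representations of $n$, which matches the final display of the paper's own proof; the $n+1$ appearing in the index of the theorem statement is an inconsistency in the paper, not a gap in your argument.
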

\begin{proof}
In general, the congruences ${{2p+1}\choose{0}}\equiv {{2p+1}\choose{1}}\equiv 1\pmod{p}$ are true.
Consider the product:
\begin{equation}\label{product}
\prod_{k=1}^{2p+1}\frac{2p+1-(k-1)}{k}.
\end{equation}
Here, the product of the first $r$ terms gives the value ${{2p+1}\choose{r}}$. Based on this observation, we deduce that ${{2p+1}\choose{r}}\equiv 0\pmod{p}$, for $r$ limited to the bound $2\leq r\leq p-1$.

We will now show that ${{2p+1}\choose{p}}\equiv 2\pmod{p}$. To that end, consider the product of the first $p$ terms of (\ref{product}):
\[\frac{2p+1}{1}\cdot\frac{2p}{2}\cdot\frac{2p-1}{3}\cdots\frac{(2p+1)-(p-1)}{p}.
\]
After cancelling $p$ in the above product, we obtain the following term:
\[\frac{(2p+1)\times{2}\times(2p-1)\times\cdots\times{((2p+1)-(p-1))}}{(p-1)!}.
\]
Wilson's theorem allows us to write
\begin{align*}
(2p+1)\times{2}\times(2p-1)\times\cdots\times((2p+1)-(p-1))&\equiv -2\times(p-2)!\pmod{p}\\
&\equiv -2\pmod{p}.
\end{align*}
The form of ${{2p+1}\choose{p}}$ is thus $\frac{sp-2}{rp-1}$. From this, we have ${{2p+1}\choose{p}}-2=\frac{sp-2}{rp-1}-2=\frac{(s-2r)p}{rp-1}\equiv 0\pmod{p}$.

Considering that ${{2p+1}\choose{k}}={{2p+1}\choose{2p+1-k}}$, we can write
\begin{align*}
\sum_{n=0}^{\infty}\tau_{2p+1}(n+1)q^n
&\equiv\prod_{m=1}^{\infty}\left(1-q^m-2q^{pm}+2q^{(p+1)m}\right.\\
&\text{\ \ \ \ \ \ \ \ \ \ \ \ }\left.+q^{2pm}-q^{(2p+1)m}\right)\pmod{p}\\ 
&\equiv\prod_{m=1}^{\infty}\left[(1-q^m){(1-q^{pm})^2}\right]\pmod{p}.
\end{align*}
In light of Euler's pentagonal number theorem, we obtain
\[
\prod_{m=1}^{\infty}(1-q^m)(1-q^{pm})^2=1+\left(\sum_{n=\frac{3r^2\pm r}{2}+p\left(\frac{3s^2\pm s}{2}+\frac{3t^2\pm t}{2}\right)}(-1)^{r+s+t}\right)q^n.
\]
By using this in the congruence mentioned above, we obtain the intended congruence. 
\end{proof}
\begin{theorem}
Let $p$ be an odd prime number. We have
\begin{equation}
\tau_{p^2+1}(n+1)\equiv\sum_{n+1=\frac{3r^2\pm r}{2}+p^2\frac{3s^2\pm s}{2}}(-1)^{r+s}\pmod{p}.
\end{equation}
\end{theorem}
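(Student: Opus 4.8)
The plan is to follow the template established in the preceding theorems of this subsection: reduce the product $\prod_{m=1}^{\infty}(1-q^m)^{p^2+1}$ modulo $p$ to a product of two simpler factors to which Euler's pentagonal number theorem applies, and then read off the coefficient of $q^n$ in the generating function $\sum_{n=0}^{\infty}\tau_{p^2+1}(n+1)q^n$.

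First I would reduce the $(p^2+1)$-th power modulo $p$. The cleanest route is the Frobenius congruence $f(q)^p\equiv f(q^p)\pmod{p}$ valid for $f(q)\in\mathbb{Z}[[q]]$: applying it twice gives $(1-q^m)^{p^2}\equiv(1-q^{pm})^p\equiv 1-q^{p^2m}\pmod{p}$, whence
\[
(1-q^m)^{p^2+1}\equiv(1-q^m)(1-q^{p^2m})\pmod{p}.
\]
Equivalently one can argue directly with binomial coefficients, as is done in the $2p$ and $2p+1$ cases: writing $p^2+1=1\cdot p^2+0\cdot p+1$ in base $p$, Lucas' theorem shows that $\binom{p^2+1}{k}$ is nonzero modulo $p$ only for $k\in\{0,1,p^2,p^2+1\}$, where it equals $1$, and the signs coming from $(-q^m)^k$ assemble exactly the four surviving monomials $1-q^m-q^{p^2m}+q^{(p^2+1)m}$, which factors as $(1-q^m)(1-q^{p^2m})$. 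Either derivation yields $\prod_{m=1}^{\infty}(1-q^m)^{p^2+1}\equiv\prod_{m=1}^{\infty}(1-q^m)(1-q^{p^2m})\pmod{p}$.

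Next I would apply Euler's pentagonal number theorem to each factor separately, obtaining $\prod_{m=1}^{\infty}(1-q^m)=\sum_{r}(-1)^r q^{(3r^2\pm r)/2}$ and $\prod_{m=1}^{\infty}(1-q^{p^2m})=\sum_{s}(-1)^s q^{p^2(3s^2\pm s)/2}$. Multiplying these two series and collecting terms gives
\[
\prod_{m=1}^{\infty}(1-q^m)(1-q^{p^2m})=1+\sum_{n+1=\frac{3r^2\pm r}{2}+p^2\frac{3s^2\pm s}{2}}(-1)^{r+s}q^n.
\]
Since $\sum_{n=0}^{\infty}\tau_{p^2+1}(n+1)q^n=\prod_{m=1}^{\infty}(1-q^m)^{p^2+1}$ by Definition \ref{main-definition}, comparing the coefficient of $q^n$ on both sides yields the claimed congruence.

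The computation is almost entirely routine once the reduction of the power is in place; the only point requiring care is the sign bookkeeping, namely verifying that $(-1)^{p^2}=-1$ and $(-1)^{p^2+1}=1$ (using that $p$ is odd) so that the four surviving monomials combine into $(1-q^m)(1-q^{p^2m})$ rather than some competing sign pattern. This is the same mild obstacle encountered in the $2p$ and $2p+1$ theorems, and it is dispatched at once by the oddness of $p$.
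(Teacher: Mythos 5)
Your proposal is correct and follows essentially the same route as the paper: reduce $\prod_{m=1}^{\infty}(1-q^m)^{p^2+1}$ modulo $p$ to $\prod_{m=1}^{\infty}(1-q^m)(1-q^{p^2m})$ by showing only $k\in\{0,1,p^2,p^2+1\}$ contribute, and then apply Euler's pentagonal number theorem to each factor. The only difference is in how the vanishing of $\binom{p^2+1}{k}$ for $2\le k\le p^2-1$ is justified --- you invoke the Frobenius congruence $f(q)^p\equiv f(q^p)\pmod{p}$ (or Lucas' theorem), whereas the paper carries out a direct $p$-adic valuation count of numerator and denominator --- but this is the same step done more cleanly, not a genuinely different argument.
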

\begin{proof}
The following congruences are all evident: ${{p^2+1}\choose{0}}\equiv 1\pmod{p}$, ${{p^2+1}\choose{1}}\equiv 1\pmod{p}$, ${{p^2+1}\choose{p^2}}\equiv 1\pmod{p}$ and ${{p^2+1}\choose{p^2+1}}\equiv 1\pmod{p}$. 

Consider the term 
\[
{{p^2+1}\choose{k}}=\frac{(p^2+1)\times p^2\times\cdots \times (p^2+1-(k-1))}{1\times 2\times\cdots\times k}.
\]

Assume $1\leq a\leq p-1$. It can then be observed that, for $(a-1)p+1\leq k-1\leq ap$, $$\vartheta_p((p^2+1)\times (p^2+1-1)\times\cdots \times(p^2+1-(k-1)))=a+1$$ and $$\vartheta_p(1\times 2\times \cdots \times k)=a-1 \text{ or } a.$$

Assume $k-1\in\{(p-1)p+1,\cdots ,p^2-2\}$. It is easy to see that $$\vartheta_p((p^2+1)\times p^2\times\cdots \times(p^2-(k-1)))=p+1$$
and 
$$\vartheta_p(1\times 2\times \cdots \times k)=p-1.$$

Consequently, ${{p^2+1}\choose{k}}\equiv 0\pmod{p}$ when $2\leq k\leq p^2-1$. Based on this observation, we may write
\begin{align*}
\sum_{n=0}^{\infty}\tau_{p^2+1}(n+1)q^n
&\equiv\prod_{m=1}^{\infty}\left(1-q^m+(-1)^{p^2}q^{p^2m}+(-1)^{p^2+1}q^{(p^2+1)m}\right)\pmod{p}\\ 
&\equiv\prod_{m=1}^{\infty}(1-q^m)(1-q^{p^2m})\pmod{p}.
\end{align*}
Since
\[\prod_{m=1}^{\infty}(1-q^m)(1-q^{p^2m})=1+\left(\sum_{n=\frac{3r^2\pm r}{2}+p^2\frac{3s^2\pm s}{2}}(-1)^{r+s}\right)q^n,
\]
the result follows.
\end{proof}
\subsection{ Congruence Properties of $R_9(n)$ modulo ${3}$ and $R_p(n)$ modulo ${p}$, where $p$ is a Prime Number}
In this section, we apply Ewell's congruence for $\tau(n)$ to obtain a recursive congruence relation for $R_9(n)$ modulo $3$. Additionally, for any prime number $p$, we derive an expression for $R_p(n)$ modulo ${p}$.
\begin{theorem}\label{nine-regular}
Let $n$ be a positive integer. We have
\begin{equation}\label{R9-rec}
R_9(4n+1)\equiv R_9(n)\pmod{3}.
\end{equation}
\end{theorem}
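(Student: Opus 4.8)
The plan is to route everything through Ramanujan's tau function, exploiting the bridge between $R_9$ and $\tau$ already established in this section. First I would restate Theorem \ref{taumodulo3} in the convenient form $R_9(m) \equiv \tau(3m+1) \pmod 3$, valid for every $m \geq 0$: this is exactly the case $3 \mid n$ of that theorem applied with $n = 3m$, since then $\tau(3m+1) \equiv R_9(3m/3) = R_9(m) \pmod 3$.

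Next I would specialize this identity to the two arguments appearing in the claim. Taking $m = 4n+1$ gives $R_9(4n+1) \equiv \tau(3(4n+1)+1) = \tau(12n+4) \pmod 3$, and the key algebraic observation is that $12n+4 = 4(3n+1)$. Taking $m = n$ gives $\tau(3n+1) \equiv R_9(n) \pmod 3$. Thus the whole statement collapses to showing $\tau(4(3n+1)) \equiv \tau(3n+1) \pmod 3$, that is, that scaling the argument of $\tau$ by $4$ leaves its residue modulo $3$ unchanged.

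To close this gap I would invoke Ewell's congruence for $\tau$, whose relevant consequence is $\tau(4N) \equiv \tau(N) \pmod 3$ for every positive integer $N$; applying it with $N = 3n+1$ then completes the chain $R_9(4n+1) \equiv \tau(4(3n+1)) \equiv \tau(3n+1) \equiv R_9(n) \pmod 3$, which is the assertion.

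I expect the only genuine obstacle to be the reduction $\tau(4N) \equiv \tau(N) \pmod 3$. When $N$ is odd it is immediate from the multiplicativity of $\tau$ together with the value $\tau(4) = -1472 \equiv 1 \pmod 3$, since then $\tau(4N) = \tau(4)\tau(N) \equiv \tau(N)$. The real point is that $3n+1$ is even precisely when $n$ is odd, and there multiplicativity no longer applies; this is exactly the case that forces one to appeal to the full strength of Ewell's congruence, which is valid for all $N$. As an independent check one can bypass $\tau$ altogether using the Ramanujan congruence $\tau(N) \equiv N\sigma(N) \pmod 3$ recalled earlier: since $4 \equiv 1 \pmod 3$ the claim becomes $\sigma(4N) \equiv \sigma(N) \pmod 3$, and writing $N = 2^a b$ with $b$ odd and $a \geq 0$ gives $\sigma(4N) - \sigma(N) = (2^{a+3} - 2^{a+1})\sigma(b) = 3\cdot 2^{a+1}\sigma(b) \equiv 0 \pmod 3$, covering odd and even $N$ uniformly.
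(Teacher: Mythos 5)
Your proposal is correct and follows essentially the same route as the paper: reduce via Theorem \ref{taumodulo3} to $\tau(4(3n+1))\equiv\tau(3n+1)\pmod 3$ and close with Ewell's congruence $\tau(4N)\equiv\tau(N)\pmod 3$. Your supplementary check via $\tau(N)\equiv N\sigma(N)\pmod 3$ is a nice independent confirmation but not part of the paper's argument.
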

\begin{proof}
Theorem \ref{taumodulo3} allows us to write
$$
R_9(n)\equiv \tau(3n+1)\pmod 3.
$$
Ewell \cite{ewell} proved that 
$$
\tau(4n)\equiv\tau(n)\pmod{3}.
$$
Given these insights, we may write
\begin{align*}
R_9(4n+1)&\equiv\tau(4(3n+1))\pmod{3}\\
&\equiv\tau(3n+1)\pmod{3}\\
&\equiv R_9(n)\pmod{3}.
\end{align*}
Now the result follows.
\end{proof}
\begin{corollary} Let $r$ and $s$ be positive integers. We have
\begin{equation}\label{r9}
R_9\left((r-1)4^{s-1}+\frac{4^{s}-1}{3}\right)\equiv R_9(r)\pmod{3}.
\end{equation}
In particular,
\[R_9\left(\frac{4^s-1}{3}\right)\equiv1\pmod{3},
\]
\[R_9\left(4^{s-1}+\frac{4^s-1}{3}\right)\equiv2\pmod{3}
\]
and
\[R_9\left(2\times 4^{s-1}+\frac{4^s-1}{3}\right)\equiv0\pmod{3}.
\]
\end{corollary}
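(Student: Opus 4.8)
The plan is to derive the general congruence (\ref{r9}) by iterating the recurrence of Theorem \ref{nine-regular} a total of $s-1$ times, and then to read off the three special cases by substituting $r=1,2,3$. First I would introduce the affine map $\phi(n)=4n+1$, so that Theorem \ref{nine-regular} reads $R_9(\phi(n))\equiv R_9(n)\pmod{3}$. A routine induction on the number of iterations then gives $R_9(\phi^{(k)}(r))\equiv R_9(r)\pmod{3}$ for every $k\geq 0$, since each application of $\phi$ preserves the residue of $R_9$ modulo $3$.

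The technical heart of the argument is the closed form for the $k$-fold iterate. I would compute $\phi^{(k)}(n)=4^k n+\bigl(4^{k-1}+4^{k-2}+\cdots+1\bigr)=4^k n+\frac{4^k-1}{3}$, the inner sum being a finite geometric series. Setting $k=s-1$ and $n=r$ yields $\phi^{(s-1)}(r)=4^{s-1}r+\frac{4^{s-1}-1}{3}$. A short simplification — rewriting $4^{s-1}r$ as $(r-1)4^{s-1}+4^{s-1}$ and checking that $4^{s-1}+\frac{4^{s-1}-1}{3}=\frac{4^s-1}{3}$ — shows that this iterate equals exactly the argument $(r-1)4^{s-1}+\frac{4^s-1}{3}$ appearing on the left of (\ref{r9}). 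Combining this identity with the iterated congruence of the previous paragraph establishes (\ref{r9}).

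For the three ``in particular'' statements I would specialize $r$ to $1$, $2$, and $3$. The left sides of (\ref{r9}) then collapse to $R_9\!\left(\frac{4^s-1}{3}\right)$, $R_9\!\left(4^{s-1}+\frac{4^s-1}{3}\right)$, and $R_9\!\left(2\cdot 4^{s-1}+\frac{4^s-1}{3}\right)$, while the right sides become $R_9(1)$, $R_9(2)$, and $R_9(3)$. It remains to evaluate these base cases directly: since every partition of $1$, $2$, or $3$ uses only parts at most $3$ and hence none divisible by $9$, we have $R_9(1)=p(1)=1$, $R_9(2)=p(2)=2$, and $R_9(3)=p(3)=3$, yielding the residues $1$, $2$, and $0$ modulo $3$ as claimed.

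I expect no genuine obstacle here: the only step demanding any care is the geometric-series evaluation of $\phi^{(k)}$ together with the ensuing algebraic check that the iterate lands precisely on $(r-1)4^{s-1}+\frac{4^s-1}{3}$; everything else is a direct appeal to Theorem \ref{nine-regular} and an inspection of the partitions of $1$, $2$, and $3$.
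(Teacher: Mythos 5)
Your proposal is correct and follows essentially the same route as the paper: both iterate the congruence of Theorem \ref{nine-regular} $s-1$ times, identify the closed form of the iterate as $(r-1)4^{s-1}+\frac{4^s-1}{3}$, and finish by evaluating $R_9(1)=1$, $R_9(2)=2$, $R_9(3)=3$. The only (immaterial) difference is that you obtain the closed form by summing the geometric series directly, whereas the paper solves the recurrence $a_s=4a_{s-1}+1$ via a generating function.
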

\begin{proof}
The recurrence relation \[a_{s}=4a_{s-1}+1, \] with initial condition $a_1=r$ must be solved in order to apply the recursive congruence relation (\ref{R9-rec}) of Theorem \ref{nine-regular}.

Let $F(x)=a_1x+a_2x^2+\cdots $ be the generating function for the sequence $a_1,a_2,\ldots$. Then the above recurrence relation yields
\[F(x)=\frac{(r-1)x}{1-4x}+\frac{x(1-x)}{1-4x}.
\]
We obtain $a_s=(r-1)4^{s-1}+\frac{4^s-1}{3}$ when we expand the aforementioned expression using the geometric series expansion. 

We now obtain \[R_9(a_s)\equiv R_9(a_{s-1})\pmod{3}\] according to Theorem \ref{nine-regular}.
The preceding recursive congruence is then applied $s-1$ times, yielding the following congruence: 
\[R_9(a_s)\equiv R_9(r)\pmod{3}.\]
Now (\ref{r9}) follows. After fixing $r=1, r=2$, and $r=3$, respectively, and noticing that $R_9(1)=1$, $R_9(2)=2$, and $R_9(3)=3$, we obtain the particular cases.
\end{proof}
\begin{theorem}
For every prime number $p$, we have
\begin{equation}
R_{p}(n)\equiv \tau_{p-1}(n+1)\pmod{p}.
\end{equation}
\end{theorem}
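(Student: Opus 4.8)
The plan is to compute the generating function $\sum_{n=0}^{\infty}\tau_{p-1}(n+1)q^n=\prod_{m=1}^{\infty}(1-q^m)^{p-1}$ modulo $p$ and to recognize the resulting product as the generating function of $R_p(n)$. This parallels the earlier theorem relating $R_{2^s}(n)$ and $\tau_{2^s-1}(n+1)$, but it is simpler: here I can use the elementary congruence $\binom{p}{k}\equiv 0\pmod p$ for $1\le k\le p-1$ in place of Glaisher's result.

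First I would record that for a prime $p$ one has $(1-q^m)^p\equiv 1-q^{pm}\pmod p$, which follows directly from $\binom{p}{k}\equiv 0\pmod p$ for $1\le k\le p-1$ (the Frobenius endomorphism on $\mathbb{F}_p[[q]]$). Since $1-q^m$ is a unit in the ring of formal power series, I may divide by it to obtain
\[
(1-q^m)^{p-1}=\frac{(1-q^m)^p}{1-q^m}\equiv\frac{1-q^{pm}}{1-q^m}\pmod p.
\]
Taking the product over all $m\ge 1$ then gives
\[
\sum_{n=0}^{\infty}\tau_{p-1}(n+1)q^n=\prod_{m=1}^{\infty}(1-q^m)^{p-1}\equiv\prod_{m=1}^{\infty}\frac{1-q^{pm}}{1-q^m}\pmod p,
\]
and by the generating-function identity for $t$-regular partitions established earlier in the paper (with $t=p$), the right-hand product equals $\sum_{n=0}^{\infty}R_p(n)q^n$. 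Comparing coefficients of $q^n$ on both sides yields $\tau_{p-1}(n+1)\equiv R_p(n)\pmod p$, as desired.

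The argument is essentially routine, and I do not expect a genuine obstacle. The only point requiring a little care is the division by $1-q^m$ at the level of power series modulo $p$; this is harmless because $1-q^m$ is invertible in $\mathbb{F}_p[[q]]$, so the congruence $(1-q^m)^{p-1}\equiv (1-q^{pm})/(1-q^m)$ is equivalent, after multiplying through by the unit $1-q^m$, to the valid identity $(1-q^m)^p\equiv 1-q^{pm}\pmod p$. Thus the entire content of the proof reduces to the binomial-coefficient congruence together with the $R_p$ generating function recalled at the start of Section 3.
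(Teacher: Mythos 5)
Your proof is correct and follows essentially the same route as the paper: both reduce $\prod_{m\ge1}(1-q^m)^{p-1}$ modulo $p$ to $\prod_{m\ge1}\frac{1-q^{pm}}{1-q^m}$ and invoke the $R_p$ generating function. The only (cosmetic) difference is that you reach the key congruence via the Frobenius identity $(1-q^m)^p\equiv 1-q^{pm}\pmod p$ and division by the unit $1-q^m$, whereas the paper expands $(1-q^m)^{p-1}$ directly using $\binom{p-1}{k}\equiv(-1)^k\pmod p$.
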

\begin{proof}
Since $p$ is a prime number, for $1\leq k\leq \lfloor\frac{p}{2}\rfloor$, we obtain
\begin{align*}{{p-1}\choose{k}}&=\frac{tp+k!(-1)^k}{k!}\\
&\equiv (-1)^{k}\pmod{p}.
\end{align*}
Given this observation, we may write
\begin{align*}
\sum_{n=0}^{\infty}\tau_{p-1}(n+1)q^{n}&\equiv\prod_{m=1}^{\infty}\left(1+q^{m}+\cdots +q^{(p-1)m}\right)\pmod{p}\\
&\equiv\prod_{m=1}^{\infty}\frac{1-q^{pm}}{1-q^m}\pmod{p}\\
&\equiv\sum_{n=1}^{\infty}R_p(n)q^n\pmod{p}.
\end{align*}
Now the result follows.
\end{proof}
\section{ Divisibility of Partition Function Weighted Composition Sums }
The Ramanujan's congruences for the partition function are as follows:
\begin{description}
\item{(a)} $p(5n+4)\equiv 0\pmod{5}$,
\item{(b)} $p(7n+5)\equiv 0\pmod{7}$,
\item{(c)} $p(11n+6)\equiv 0\pmod{11}$.
\end{description}
It is rare to find such a simple congruence for other larger primes. The following congruence (modulo $l$, an odd prime), involving partition function values, is found in this section:
\begin{equation*}
\sum_{\substack{{n=a_1+a_2+\cdots a_k}\\{a_{i}\in\mathbb{N}\cup\{0\}}}}p(a_{1})p(a_{2})\cdots p(a_{k})\equiv \sum_{n=t+ls}\tau_{l-k}(t)p(s)\pmod{l}.
\end{equation*}
The following lemma is crucial in obtaining the above congruence.
\begin{lemma}\label{binomialmodulooddprime}
Let $l$ be an odd prime number. Let $k$ be an integer such that $1\leq k<l$. Let $n\geq 0$ be an integer. Let $r$ be the remainder obtained by division of $n$ into $l$. We have
\begin{equation}
{{n+k}\choose{k}}\equiv \begin{cases}(-1)^{r}{{l-k-1}\choose{r}}\pmod{l}&\text{ when $r\in\{0,1,\cdots ,l-k-1\}$;}\\
0\pmod{l}&\text{when $r\in\{l-k,\cdots ,l-1\}$.}
\end{cases}
\end{equation}
\end{lemma}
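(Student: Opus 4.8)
The plan is to work directly with the factorial expression $\binom{n+k}{k}=\frac{(n+1)(n+2)\cdots(n+k)}{k!}$ and reduce it modulo $l$. Since $1\le k<l$, the denominator $k!$ is a product of integers none of which is divisible by the prime $l$, so $k!$ is invertible modulo $l$; consequently the residue of $\binom{n+k}{k}$ is governed entirely by the numerator. Writing $n=ql+r$ with $0\le r\le l-1$, I have $n+i\equiv r+i\pmod{l}$ for each $i$, so that the numerator satisfies $(n+1)\cdots(n+k)\equiv (r+1)(r+2)\cdots(r+k)\pmod{l}$. The two cases of the statement then correspond exactly to whether or not the block of $k$ consecutive integers $r+1,\dots,r+k$ contains a multiple of $l$.

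First I would dispose of the case $r\in\{l-k,\dots,l-1\}$. Here $r+1\le l\le r+k$, so the integer $l$ itself lies in the range $\{r+1,\dots,r+k\}$; hence one of the factors of the numerator is divisible by $l$, forcing $(n+1)\cdots(n+k)\equiv 0\pmod l$. Because $l\nmid k!$, the integer $\binom{n+k}{k}$ is then itself divisible by $l$, which is precisely the second branch of the claim.

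The main work is the case $r\in\{0,1,\dots,l-k-1\}$. Now $1\le r+1$ and $r+k\le l-1$, so none of the factors is a multiple of $l$ and I may rewrite $(r+1)\cdots(r+k)=\frac{(r+k)!}{r!}$, giving $\binom{n+k}{k}\equiv\binom{r+k}{k}\pmod l$. It remains to convert this into the advertised form. Using $\binom{r+k}{k}=\binom{r+k}{r}=\frac{(k+1)(k+2)\cdots(k+r)}{r!}$ together with the congruences $k+j\equiv -(l-k-j)\pmod l$ for $j=1,\dots,r$, the numerator becomes $(-1)^r(l-k-1)(l-k-2)\cdots(l-k-r)$ modulo $l$. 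Since $r<l$, the factor $r!$ remains invertible, and dividing yields $\binom{n+k}{k}\equiv (-1)^r\binom{l-k-1}{r}\pmod l$, as required; the bound $r\le l-k-1$ guarantees $l-k-1\ge r\ge 0$, so the binomial on the right is an ordinary one and its numerator is indeed $(l-k-1)\cdots(l-k-r)$.

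The step I expect to require the most care is the sign manipulation in the last case: one must track that exactly $r$ factors are replaced by their negatives and that the product $(l-k-1)(l-k-2)\cdots(l-k-r)$ matches the numerator of $\binom{l-k-1}{r}$, which again relies on $r\le l-k-1$. Equivalently, one could invoke Lucas' theorem applied to $\binom{n+k}{k}$, reading off the base-$l$ digit $k$ of the lower index against the lowest digit $r+k$ or $r+k-l$ of the upper index according to whether a carry occurs; the carry case produces a digit smaller than $k$ and hence the vanishing, while the no-carry case reproduces $\binom{r+k}{k}$. The remaining manipulations are routine modular arithmetic.
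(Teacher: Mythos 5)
Your proof is correct and follows essentially the same route as the paper's: the same case split according to whether the block $n+1,\dots,n+k$ contains a multiple of $l$, the same reduction $\binom{n+k}{k}\equiv\binom{r+k}{k}\pmod{l}$, and the same sign manipulation via $k+j\equiv-(l-k-j)\pmod{l}$ (the paper merely runs this last computation in the opposite direction, starting from $(-1)^r\binom{l-k-1}{r}$ and arriving at $\binom{r+k}{r}$).
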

\begin{proof}
Consider the following expression:
\[{{n+k}\choose{k}}=\frac{(n+k)(n+k-1)\cdots (n+1)}{k!}.
\]
Using this representation, we find an expression for ${n+k}\choose{k}$ modulo $l$. 

Assume that $n=sl+r$ for some $r\in\{l-1,l-2,\cdots ,l-k\}$. Then $(n+k)(n+k-1)\cdots (n+1)\equiv 0\pmod{l}$. Since $l$ is a prime number and $k<l$, we obtain $\frac{(n+k)(n+k-1)\cdots (n+1)}{k!}\equiv0\pmod{l}$. That is, ${{n+k}\choose{k}}\equiv 0\pmod{l}$ when $r\in\{l-k,\cdots ,l-1\}$. The second case follows. 

Assume that $n=sl+r$ for some $r\in\{0,1,\cdots ,l-k-1\}$. Since $l$ is a prime number and $k<l$, we obtain
\begin{align*}
{{n+k}\choose{k}}&=\frac{(sl+r+k)(sl+r+k-1)\cdots (sl+r+1)}{k!}\\
&=\frac{s^{*}l}{k!}+\frac{(r+k)(r+k-1)\cdots (r+1)}{k!}\\
&\equiv {{r+k}\choose {k}}\pmod{l}\\
&\equiv{{r+k}\choose{r}}\pmod{l}.
\end{align*}
On the other hand, we have
\begin{align*}
(-1)^r{{l-k-1}\choose{r}}&=(-1)^r\frac{(l-k-1)(l-k-2)\cdots (l-k-1-(r-1))}{r!}\\
&=(-1)^r\frac{(l-(k+1))(l-(k+2))\cdots (l-(k+r))}{r!}\\
&=(-1)^r\left(\frac{s^{'}l}{k!}+(-1)^r\frac{(k+1)(k+2)\cdots (k+r)}{r!}\right)\\
&\equiv (-1)^r(-1)^r{{r+k}\choose{r}}\pmod{l}.
\end{align*}
From the above two congruences, we have the relation
\[{{n+k}\choose{k}}\equiv(-1)^r{{l-k-1}\choose{r}}\pmod{l}
\]
when $k<l$. The proof is now completed.
\end{proof}
Now we are equipped to prove the following main result of this section. 
\begin{theorem}\label{compositionsum}
Let $k\geq 2$ be a positive integer. Let $l> k$ be an odd prime number. Let $p(n)$ be the number of partitions of $n$. We have
\begin{equation}
\tau_{-k}(n+1)\equiv\sum_{n=t+ls}\tau_{l-k}(t)p(s)\pmod{l}.
\end{equation}
In another notation, 
\begin{equation}
\sum_{\substack{{n=a_1+a_2+\cdots a_k}\\{a_{i}\in\mathbb{N}\cup\{0\}}}}p(a_{1})p(a_{2})\cdots p(a_{k})\equiv \sum_{n=t+ls}\tau_{l-k}(t)p(s)\pmod{l}.
\end{equation}
\end{theorem}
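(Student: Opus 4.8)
The plan is to establish the formal power series congruence
\[
\prod_{m=1}^{\infty}(1-q^m)^{-k}\equiv\left(\prod_{m=1}^{\infty}(1-q^m)^{l-k}\right)\left(\prod_{m=1}^{\infty}\frac{1}{1-q^{lm}}\right)\pmod{l},
\]
and then to compare coefficients of $q^n$ on the two sides. First I would record the two readings of the left-hand coefficient. By Definition \ref{main-definition}, $\tau_{-k}(n+1)$ is the coefficient of $q^n$ in $\prod_{m=1}^{\infty}(1-q^m)^{-k}$; since $\prod_{m=1}^{\infty}(1-q^m)^{-1}=\sum_{a\geq 0}p(a)q^a$, raising to the $k$-th power shows that this coefficient is exactly the composition sum $\sum_{a_1+\cdots+a_k=n}p(a_1)\cdots p(a_k)$. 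This identifies the two notations on the left-hand side at once, so it suffices to work with $\tau_{-k}(n+1)$.

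The heart of the argument is the reduction of a single factor modulo $l$. Expanding as in the proof of Theorem \ref{binomial-representation}(b),
\[
(1-q^m)^{-k}=\sum_{f\geq 0}\binom{f+k-1}{k-1}q^{fm}.
\]
I would apply Lemma \ref{binomialmodulooddprime} with its parameter $k$ replaced by $k-1$ (legitimate because $2\leq k<l$ forces $1\leq k-1<l$) to reduce each weight $\binom{f+k-1}{k-1}$ modulo $l$. Writing $f=jl+r$ with $0\leq r\leq l-1$, the lemma shows that $\binom{f+k-1}{k-1}\equiv 0\pmod{l}$ whenever $r>l-k$, while for $0\leq r\leq l-k$ it equals $(-1)^r\binom{l-k}{r}$, a quantity depending only on $r$. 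Consequently
\[
(1-q^m)^{-k}\equiv\left(\sum_{j\geq 0}q^{jlm}\right)\left(\sum_{r=0}^{l-k}(-1)^r\binom{l-k}{r}q^{rm}\right)\pmod{l},
\]
where the second factor is precisely the binomial expansion of $(1-q^m)^{l-k}$ (here $l-k\geq 1$ is used) and the first is $1/(1-q^{lm})$.

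Taking the product over all $m\geq 1$ then yields the displayed power series congruence. To finish I would identify $\prod_{m=1}^{\infty}1/(1-q^{lm})=\sum_{s\geq 0}p(s)q^{ls}$ and recognize $\prod_{m=1}^{\infty}(1-q^m)^{l-k}$ as the generating series of $\tau_{l-k}$ up to the single leading factor of $q$ built into Definition \ref{main-definition}, and then extract the coefficient of $q^n$ on both sides. Matching the argument $n+1$ of $\tau_{-k}$ against that leading $q$ reproduces the convolution $\sum_{t+ls\,=\,n+1}\tau_{l-k}(t)p(s)$, which is the right-hand side of the statement. The main obstacle is the middle step: verifying that the two cases of the lemma conspire so that the surviving $r$-terms reassemble exactly into the finite polynomial $(1-q^m)^{l-k}$, together with keeping the single $q$-shift of Definition \ref{main-definition} consistent throughout the coefficient comparison. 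Everything after the per-factor reduction is routine bookkeeping with generating functions.
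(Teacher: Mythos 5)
Your proposal is correct and takes essentially the same route as the paper: expand each factor $(1-q^m)^{-k}$ as $\sum_{f}\binom{f+k-1}{k-1}q^{fm}$, reduce the weights via Lemma \ref{binomialmodulooddprime} with its parameter set to $k-1$, and reassemble the surviving terms into $(1-q^m)^{l-k}(1-q^{lm})^{-1}$ before taking the product over $m$ and comparing coefficients. The one point worth noting is that your bookkeeping correctly yields the convolution indexed by $t+ls=n+1$, which is how the paper's displayed index $n=t+ls$ must be read, since $\tau_{l-k}(t)$ is the coefficient of $q^{t-1}$ in $\prod_{m=1}^{\infty}(1-q^m)^{l-k}$.
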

\begin{proof}
We have
\begin{align*}
\prod_{m=1}^{\infty}(1-q^m)^{-k}&=\sum_{n=0}^\infty\tau_{-k}(n+1)q^n\\
&=1+\sum_{n=1}^{\infty}\left(\sum_{\substack{{n=a_1+a_2+\cdots a_k}\\{a_{i}\in\mathbb{N}\cup\{0\}}}}p(a_{1})p(a_{2})\cdots p(a_{k})\right)q^n.
\end{align*}
This equality permits us to present the congruence of this result in two different forms. Now in accordance with Lemma \ref{binomialmodulooddprime}, we may write
\begin{align*}
\prod_{m=1}^{\infty}(1-q^m)^{-k}&=\prod_{m=1}^{\infty}\left(\sum_{n=0}^{\infty}{{n+k-1}\choose{k-1}}q^{mn}\right)\\
&\equiv\prod_{m=1}^{\infty}\left[\left(\sum_{r=0}^{l-k}(-1)^r{{l-k}\choose{r}}q^{rm}\right)\left(1-q^{lm}\right)^{-1}\right]\pmod{l}\\
&\equiv\prod_{m=1}^{\infty}\left[(1-q^m)^{l-k}\left(1-q^{lm}\right)^{-1}\right]\pmod{l}.
\end{align*}
Now the result follows.
\end{proof}
\begin{corollary}
Let $p(n)$ be the number of partitions of $n$. We have
\begin{equation}
\sum_{\substack{{a+b=n}\\a,b\in\mathbb N\cup\{0\}}}p(a)p(b)\equiv \sum_{n=t+3s}\omega(t)p(s)\pmod{3},
\end{equation}
where $\omega(t)$ is given in (\ref{PNT}).

\end{corollary}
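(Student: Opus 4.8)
The plan is to read this off as the single specialization $k=2$, $l=3$ of Theorem \ref{compositionsum}. First I would verify the hypotheses of that theorem for these values: $k=2$ is an integer with $k\geq 2$, and $l=3$ is an odd prime satisfying $l>k$. With $k=2$ the $k$-fold weighted composition sum on the left of Theorem \ref{compositionsum} is
\[
\sum_{\substack{n=a_1+a_2\\ a_i\in\mathbb N\cup\{0\}}}p(a_1)p(a_2)=\sum_{\substack{a+b=n\\ a,b\in\mathbb N\cup\{0\}}}p(a)p(b),
\]
which is exactly the left-hand side of the corollary, so nothing remains to be done there.

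For the right-hand side I would identify the weight $\tau_{l-k}$ with the pentagonal coefficient $\omega$. Here $l-k=1$, so $\tau_{l-k}=\tau_1$, and by Definition \ref{main-definition} the generating function of $\tau_1$ is $q\prod_{m=1}^{\infty}(1-q^m)$. Since Euler's pentagonal number theorem gives $\prod_{m=1}^{\infty}(1-q^m)=\sum_{t=0}^{\infty}\omega(t)q^t$ with $\omega$ as in (\ref{PNT}), comparing coefficients after multiplying by $q$ yields $\tau_1(t+1)=\omega(t)$ for all $t\geq 0$. Substituting this into the sum $\sum_{n=t+3s}\tau_{l-k}(t)p(s)$ produced by Theorem \ref{compositionsum} converts the $\tau_1$-weight into the $\omega$-weight and delivers the claimed right-hand side $\sum_{n=t+3s}\omega(t)p(s)$.

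Equivalently, and perhaps more transparently, I would reach into the proof of Theorem \ref{compositionsum}, whose central step is the congruence $\prod_{m=1}^{\infty}(1-q^m)^{-k}\equiv\prod_{m=1}^{\infty}(1-q^m)^{l-k}(1-q^{lm})^{-1}\pmod{l}$: for $k=2$, $l=3$ the factor $\prod_{m=1}^{\infty}(1-q^m)^{l-k}$ is literally Euler's product $\sum_{t\geq 0}\omega(t)q^t$, while $\prod_{m=1}^{\infty}(1-q^{3m})^{-1}=\sum_{s\geq 0}p(s)q^{3s}$; multiplying these two series and comparing the coefficient of $q^n$ against $\prod_{m=1}^{\infty}(1-q^m)^{-2}=\sum_{n\geq 0}\big(\sum_{a+b=n}p(a)p(b)\big)q^n$ gives the result at once. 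I do not expect any genuine obstacle: this is a direct specialization, and the only point requiring care is the harmless factor-of-$q$ normalization that distinguishes $\tau_1$ from $\omega$, which Euler's theorem resolves immediately.
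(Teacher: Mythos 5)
Your proof is correct and takes essentially the same route as the paper: specialize Theorem \ref{compositionsum} to $k=2$, $l=3$, and identify the weight $\tau_{1}$ with Euler's pentagonal coefficient $\omega$. You are in fact slightly more careful than the paper's one-line proof, which states the identification as ``$\tau_1(n)=\omega(n)$'' rather than the shifted identity $\tau_1(t+1)=\omega(t)$ that you correctly derive and absorb into the summation index.
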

\begin{proof}
Fix $l=3$ and $k=2$ in Theorem \ref{compositionsum}. Now the result follows as a consequence of the observation $\tau_1(n)=\omega(n)$.
\end{proof}
Let $l\geq 5$ be a prime number. In view of Identity (\ref{JTPI}), we find a sufficient condition for the following congruence:
\[\sum_{\substack{{n=a_1+a_2+\cdots +a_{l-3}}\\{a_{i}\in\mathbb{N}\cup\{0\}}}}p(a_{1})p(a_{2})\cdots p(a_{l-3})\equiv 0\pmod{l}.
\]
\begin{corollary}
Let $l\geq 5$ be a prime number. For non-zero integer $n$, let $R(n, l)$ denote the remainder obtained by division of $n$ into $l$. If $m$ is a non-negative integer such that $m\not\equiv s\pmod{l}$ for every $$s\in\big\{0\big\}\cup\bigg\{R\left((-1)^r{{l-3}\choose{r}}, l\right):0\leq r\leq\frac{l-3}{2}\bigg\},$$ then for $n\equiv m\pmod{l}$ we have
\begin{equation}
\sum_{\substack{{n=a_1+a_2+\cdots a_{l-3}}\\{a_{i}\in\mathbb{N}\cup\{0\}}}}p(a_{1})p(a_{2})\cdots p(a_{l-3})\equiv 0\pmod{l}.
\end{equation}
\end{corollary}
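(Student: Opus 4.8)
The plan is to specialize Theorem \ref{compositionsum} to the exponent $k = l-3$ and then read off the vanishing condition from Jacobi's triple product identity. Since $l \ge 5$ we have $k = l-3 \ge 2$ and $l > k$, so Theorem \ref{compositionsum} applies; its proof in fact establishes the intermediate congruence
\[
\prod_{m=1}^{\infty}(1-q^m)^{-(l-3)} \equiv \prod_{m=1}^{\infty}(1-q^m)^{3}\prod_{m=1}^{\infty}(1-q^{lm})^{-1}\pmod{l},
\]
and the coefficient of $q^n$ on the left is exactly the composition sum $\sum p(a_1)\cdots p(a_{l-3})$ in question. Everything therefore reduces to identifying which residue classes of $n$ can receive a nonzero contribution from the right-hand side.

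First I would apply the Jacobi triple product identity (\ref{JTPI}), which gives $\prod_{m}(1-q^m)^3 = \sum_{j\ge 0}(-1)^j(2j+1)q^{T_j}$ with $T_j := \frac{j(j+1)}{2}$, while $\prod_{m}(1-q^{lm})^{-1} = \sum_{s\ge 0}p(s)q^{ls}$. Multiplying these two series, the coefficient of $q^n$ is a sum over those pairs $(j,s)$ with $T_j + ls = n$. Reducing modulo $l$, a pair can occur only when $T_j \equiv n \pmod{l}$; hence if the residue class of $n$ contains no triangular number, the coefficient is an empty sum and the composition sum vanishes modulo $l$.

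The crux is to show that the set $\{0\}\cup\{R((-1)^r\binom{l-3}{r},l):0\le r\le\frac{l-3}{2}\}$ appearing in the hypothesis is precisely the set of residues modulo $l$ represented by the triangular numbers. For this I would reduce the binomial coefficient factor by factor: since $\binom{l-3}{r}=\frac{(l-3)(l-4)\cdots(l-2-r)}{r!}$ and $l-i\equiv -i\pmod l$, the numerator is congruent to $(-1)^r\, 3\cdot 4\cdots(r+2)=(-1)^r\frac{(r+2)!}{2}$, whence
\[
(-1)^r\binom{l-3}{r}\equiv\frac{(r+1)(r+2)}{2}=T_{r+1}\pmod{l}.
\]
As $r$ runs over $0,1,\dots,\frac{l-3}{2}$ this yields $T_1,T_2,\dots,T_{(l-1)/2}$, and adjoining the displayed element $0 = T_0$ produces exactly $\{T_j:0\le j\le\frac{l-1}{2}\}$.

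Finally I would confirm that these $\frac{l+1}{2}$ values already exhaust all triangular residues modulo $l$. The triangular numbers are periodic modulo $l$ with period $l$ (one checks $T_{j+l}\equiv T_j$, using that $l$ is odd) and satisfy the reflection $T_{l-1-j}\equiv T_j\pmod{l}$, so every residue $\frac{j(j+1)}{2}$ is attained for some $j\in\{0,1,\dots,\frac{l-1}{2}\}$. Consequently the hypothesis that $m$ avoids every element of the displayed set is exactly the statement that the class of $m$ contains no triangular number, and by the second paragraph this forces the composition sum to be $\equiv 0\pmod{l}$ whenever $n\equiv m\pmod{l}$. I expect the one genuinely computational step, namely the reduction $(-1)^r\binom{l-3}{r}\equiv T_{r+1}\pmod l$ together with checking that the stated range of $r$ (plus the extra $0$) sweeps out all triangular residues and nothing else, to be the main obstacle; the generating-function reduction and the appeal to (\ref{JTPI}) are immediate once Theorem \ref{compositionsum} is invoked.
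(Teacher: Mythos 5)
Your proposal is correct and follows essentially the same route as the paper: specialize Theorem \ref{compositionsum} to $k=l-3$, apply the Jacobi triple product identity to $\tau_3$, and observe that the sum is empty unless the residue class of $n$ contains a triangular number, which is ruled out by the hypothesis. The only cosmetic difference is that you compute $(-1)^r\binom{l-3}{r}\equiv\frac{(r+1)(r+2)}{2}\pmod{l}$ directly and verify via the reflection $T_{l-1-j}\equiv T_j$ that the stated range of $r$ captures all triangular residues, whereas the paper obtains the same identification by citing Lemma \ref{binomialmodulooddprime} with $k=2$ and leaves the range reduction implicit; your version is, if anything, slightly more explicit on that point.
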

\begin{proof}
Assume the following: 
\begin{enumerate}
\item $m\not\equiv s\pmod{l}$ for every $s\in\{0\}\cup\bigg\{R\left((-1)^r{{l-3}\choose{r}},l\right):0\leq r\leq\frac{l-3}{2}\bigg\}$,
\item $n\equiv m\pmod{l}$ ,
\item $k=l-3$. 
\end{enumerate}
Upon the assumption 3 and in view of Theorem \ref{compositionsum}, we can write
\begin{equation}\label{eqn1-compsum}
\sum_{\substack{{n=a_1+a_2+\cdots a_{l-3}}\\{a_{i}\in\mathbb{N}\cup\{0\}}}}p(a_{1})p(a_{2})\cdots p(a_{l-3})\equiv \sum_{n=t+ls}\tau_{3}(t)p(s)\pmod{l}.
\end{equation}
In view of Identity (\ref{JTPI}), we can write
\[\tau_3(t)=\begin{cases}
(-1)^j(2j+1)&\text{ if $t=\frac{j(j+1)}{2}$;}\\
0&\text{ otherwise.}
\end{cases}
\]
So (\ref{eqn1-compsum}) can be written upon the assumption 2 as follows:
\begin{align*}
\sum_{\substack{{n=a_1+a_2+\cdots a_{l-3}}\\{a_{i}\in\mathbb{N}\cup\{0\}}}}p(a_{1})p(a_{2})\cdots p(a_{l-3})&\equiv \sum_{n={{j+1}\choose{2}}+ls}(-1)^{j}(2j+1)p(s)\pmod{l}\\
&\equiv\sum_{lb+m={{j+1}\choose{2}}+ls}(-1)^{j}(2j+1)p(s)\pmod{l}
\end{align*}
for some non-negative integer $b$. The index of the right extreme summation is non-empty only when ${{j+1}\choose{2}}\equiv m\pmod{l}$ for some $j$. In view of Lemma \ref{binomialmodulooddprime}, $R\left({{j+1}\choose{2}},{l}\right)$ for $j=1,2\cdots $, constitute a subset of the set $\{0\}\cup\bigg\{R\left((-1)^r{{l-3}\choose{r}},{l}\right): 0\leq r\leq\frac{l-3}{2}\bigg\}$. 
By assumption 3 we have $m\not\equiv s\pmod{l}$ for every $s\in\{0\}\cup\bigg\{R\left((-1)^r{{l-3}\choose{r}}, {l}\right):0\leq r\leq\frac{l-3}{2}\bigg\}$. Given this observation, it follows that the index of the right extreme sum is empty. This leads to the conclusion that 
\[\sum_{\substack{{n=a_1+a_2+\cdots a_{l-3}}\\{a_{i}\in\mathbb{N}\cup\{0\}}}}p(a_{1})p(a_{2})\cdots p(a_{l-3})\equiv 0\pmod{l}.
\]
Now the proof is completed.
\end{proof}
An interplay of Lemma \ref{binomialmodulooddprime} with Theorem \ref{taumodulo7} yields the following result of Ramanujan.
\begin{corollary}[Ramanujan \cite{ramanujan-20}]
Let $n$ be a positive integer. We have
\begin{equation*}
\tau(7n)\equiv 0\pmod{7}.
\end{equation*}
\end{corollary}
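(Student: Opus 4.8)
The plan is to evaluate $\tau(7n)$ by writing $\tau(7n)=\tau((7n-1)+1)$ and feeding $N=7n-1$ into the modulo~$7$ formula of Theorem~\ref{taumodulo7} (so that $\tau(N+1)=\tau(7n)$). This presents $\tau(7n)$ modulo $7$ as the sum $\sum(-1)^{m+r}(2m+1)(2r+1)$ taken over all pairs $(m,r)$ with $7n-1=\frac{m(m+1)}{2}+7\cdot\frac{r(r+1)}{2}$. First I would reduce this defining equation modulo $7$: the term $7\cdot\frac{r(r+1)}{2}$ vanishes and $7n-1\equiv-1\pmod 7$, so every contributing pair must satisfy $\frac{m(m+1)}{2}\equiv-1\pmod 7$.

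Next I would pin down which residues of $m$ are compatible with this constraint. Writing $\frac{m(m+1)}{2}=\binom{m+1}{2}=\binom{(m-1)+2}{2}$ and applying Lemma~\ref{binomialmodulooddprime} with $l=7$ and $k=2$ expresses the triangular numbers modulo $7$ through the values $(-1)^{r}\binom{4}{r}$, where $r$ is the residue of $m-1$ modulo $7$. A short tabulation of these values shows that $-1\equiv 6$ is attained exactly when $r=2$, i.e.\ when $m\equiv 3\pmod 7$. The punchline of the interplay is that for such $m$ we have $2m+1\equiv 7\equiv 0\pmod 7$, so the linear weight $(2m+1)$ attached to every surviving term is divisible by $7$.

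Consequently every term of the sum in Theorem~\ref{taumodulo7} that can contribute to $\tau(7n)$ carries a factor $(2m+1)\equiv 0\pmod 7$, forcing the whole sum, and hence $\tau(7n)$, to vanish modulo $7$. The step that needs the most care, and the main obstacle, is precisely the interplay flagged in the statement: one must verify that the congruence $\frac{m(m+1)}{2}\equiv-1\pmod 7$ singles out exactly the class $m\equiv 3\pmod 7$ on which $2m+1$ is killed. As an equivalent and perhaps cleaner check I would note the completing-the-square identity $8\cdot\frac{m(m+1)}{2}=(2m+1)^2-1$; since $8\equiv 1\pmod 7$ this shows $\frac{m(m+1)}{2}\equiv-1\pmod 7$ is equivalent to $(2m+1)^2\equiv 0\pmod 7$, i.e.\ to $7\mid 2m+1$, confirming the vanishing directly.
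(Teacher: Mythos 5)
Your proof is correct and follows essentially the same route as the paper: apply Theorem \ref{taumodulo7} to $\tau((7n-1)+1)$, reduce the index equation modulo $7$ to force $\binom{m+1}{2}\equiv 6\pmod 7$, and invoke Lemma \ref{binomialmodulooddprime} (with $6=\binom{4}{2}$, so $m-1\equiv 2$, hence $7\mid 2m+1$) to kill every term of the sum. Your closing observation via $(2m+1)^2\equiv 8\cdot\frac{m(m+1)}{2}+1$ is a nice, more direct verification of that key step, but the argument is otherwise the paper's own.
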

\begin{proof}
In accordance with Theorem \ref{taumodulo7}, we can write
\begin{align}
\notag \tau(7n)&=\tau(7n-1+1)\\
&\label{mod7}\equiv\sum_{7n-1=\frac{m(m+1)}{2}+7\frac{r(r+1)}{2}}(-1)^{m+r}(2m+1)(2r+1)\pmod{7}.
\end{align}
The index of the summation above suggests that 
\[{{m+1}\choose{2}}={{m-1+2}\choose{2}}\equiv 6\pmod{7}.
\]
Since $6={4\choose{2}}$, in view of Lemma \ref{binomialmodulooddprime}, we obtain that $m-1\equiv 2\pmod{7}$. This implies that $2m+1\equiv 0\pmod{7}$. Substituting this congruence in the sum in (\ref{mod7}), we obtain that $\tau(7n)\equiv 0\pmod{7}$. 
\end{proof}
\noindent {\bf Acknowledgement.} The authors thank the referee and managing editor for their insightful suggestions, which improved the paper's content and presentation. 

\end{document}